

\documentclass[final]{elsarticle}

\usepackage{jcomp}
\usepackage{framed,multirow}

\usepackage{amssymb}
\usepackage{latexsym}

\usepackage{amsmath,epsfig,cmmib57}
\usepackage{array}
\usepackage[T1]{fontenc}
\usepackage[latin9]{inputenc}
\usepackage{geometry}
\usepackage{times}
\usepackage{subcaption}
\usepackage{graphicx}
\usepackage{amsthm}
\usepackage{algorithm}
\usepackage[noend]{algpseudocode}
\usepackage{booktabs,graphicx,makecell,siunitx,eqparbox}
\usepackage{epic,eepic}


\usepackage{url}
\usepackage{xcolor}
\definecolor{newcolor}{rgb}{.8,.349,.1}

\newtheorem{theorem}{THEOREM}[section]

\newtheorem{assumption}[theorem]{Assumption}

\newcommand{\bsym}{\boldsymbol}

\journal{Journal of Computational Physics}

\begin{document}

\verso{Hee Jun Yang and Hyea Hyun Kim}

\begin{frontmatter}

\title{Iterative algorithms for partitioned neural network approximation
to partial differential equations}

\author[1]{Hee Jun \snm{Yang}}
\ead{yhjj109@khu.ac.kr}

\author[2]{Hyea Hyun \snm{Kim}\corref{cor1}\fnref{fn1}}
\ead{hhkim@khu.ac.kr}
\cortext[cor1]{Corresponding author}

\address[1]{Department of Mathematics, Kyung Hee University, Seoul, Republic of Korea}
\address[2]{Department of Applied Mathematics and Institute of Natural Sciences, Kyung Hee University, Yongin, Republic of Korea}

\begin{keyword}
\KWD partitioned neural network approximation \sep partial differential equation \sep iterative algorithm \sep domain decomposition \sep parallel computing
\end{keyword}

\begin{abstract}
To enhance solution accuracy and training efficiency in neural network approximation
to partial differential equations,
partitioned neural networks can be used as a solution surrogate
instead of a single large and deep neural network
defined on the whole problem domain.
In such a partitioned neural network approach,
suitable interface conditions or subdomain boundary conditions
are combined to obtain a convergent approximate solution.
However, there has been no rigorous study on the convergence
and parallel computing enhancement on the partitioned neural network approach.
In this paper, iterative algorithms are proposed to address these issues.
Our algorithms are based on classical additive Schwarz domain decomposition methods.
Numerical results are included to show the performance of the proposed iterative algorithms.
\end{abstract}

\end{frontmatter}



\section{Introduction}
With the success of deep learning technologies in many application areas, recently,
there have been developed many successful approaches to solve partial differential equations using
deep neural networks, see~\cite{weinan2017,sirignano2018,raissi2019,long2019}.
The advantage of these new approaches is that they can be used for partial differential equations
without much concern on the shape  or the dimension of the problem domain.
On the other hand, the approximate solution accuracy highly depends on hyper parameter settings,
such as, the network's depth and width, training data, activation functions, the learning rate,
and the optimization method. In addition, long parameter training time
makes the neural network solution very expensive.

There have been some successful attempts to alleviate these difficulties by introducing
partitioned neural networks as a solution surrogate, see \cite{Cpinn,Xpinn,FBpinn}.
In \cite{Cpinn,Xpinn}, partitioned neural networks are formed on a non-overlapping
subdomain partition of the problem domain, where each local neural network approximates
the solution in each subdomain and
additional continuity conditions on local neural network solutions are included
to the loss function
to obtain a convergent approximate solution.
In \cite{FBpinn}, a similar but different form of partitioned neural networks is proposed
based on an overlapping subdomain partition. In that approach, window functions are introduced
and they are used to form a global function, called FBPINNs (Finite Basis Physics Informed Neural Networks),
that is a sum of localized neural network functions.
Such a global function is used as a solution surrogate in their approach.
Differently to \cite{Cpinn,Xpinn}, no additional interface condition appears in the FBPINN method and
such a new approach was shown to be very effective for high oscillation model problem defined in a large problem domain.
In all these previous approaches, the local neural network parameters are updated every training epoch
and such an update procedure requires neighboring neural network's parameter information.
Since the number of training epochs can become easily more than several hundreds of thousand in many application problems, such a heavy communication cost makes the method inefficient under the parallel computing environment.

In this work, we propose iteration methods to address the heavy communication cost problem in the above mentioned partitioned neural network approaches. In our proposed iteration methods, the communication between neighboring subdomains happens every outer iteration but not every training epoch, and the proposed methods thus reduce the communication cost greatly.
Our iteration methods are based on the classical additive Schwarz algorithm.
The additive Schwarz algorithm is one of the most successful and well-known domain decomposition algorithms.
Domain decomposition algorithms are
widely used as fast solution procedures of algebraic equations arising from discretization of partial differential equations.
The original algebraic equations are restricted and solved in each subdomain combined with an iterative procedure.
The resulting solution for the original algebraic equations is then obtained from the iterative procedure.
In such approaches, the convergence often gets slow as more subdomains are introduced.
To accelerate the convergence, a global coarse problem is combined in the iterative procedure.
We refer \cite{TW-Book} for a general introduction to domain decomposition algorithms.

In our proposed iterative algorithms, we only consider partitioned neural networks built on
overlapping subdomain partitions.
We first propose an one-level iterative algorithm where each local neural network solves a local problem
at each outer iteration given the previous iterate.
We also provide a concrete convergence analysis for the proposed one-level algorithm.
The convergence in our one-level algorithm gets slower as more subdomains are in the partition,
that is a well-known and common convergence property in the one-level domain decomposition algorithms.
We thus include a coarse component in the partitioned neural networks to make
the iteration convergence robust to the increase of the number of subdomains.
For the proposed two-level iterative algorithm, we also prove a convergence result that
is independent of the number of subdomains.
We note that in previous pioneering studies~\cite{li2019,li2020-pro},
a similar idea is used but there has been no concrete convergence study
on the convergence of the iterative scheme.

Preliminary versions of this paper were appeared in \cite{DD26-kim-yang,arXiv-yang-kim-2022,DD27-kim-yang}.
In \cite{DD26-kim-yang,DD27-kim-yang}, the additive Schwarz algorithms are proposed and tested numerically
and in \cite{arXiv-yang-kim-2022} convergence analysis and some numerical results are presented.
In our current work, we improve the convergence results in the two-level method by proposing partitioned
neural networks combined with a partition of unity functions and we also present more extensive numerical
results for the proposed algorithms. For the completeness of the paper, we included the convergence analysis
of the additive Schwarz algorithms that has been carried out in our previous work~\cite{arXiv-yang-kim-2022}.
As related works, we also refer to \cite{heinlein2021combining}, where combination of machine learning and domain decomposition methods are reviewed and discussed, and to \cite{chung2021learning}, where a learning algorithm is developed to learn effective coarse basis in the adaptive BDDC domain decomposition methods.
Among many successful neural network approaches, we will simply use the PINN (Physics Informed Neural Network) method
in \cite{raissi2019}, when finding the approximate solutions for the local and coarse problems in our
proposed iterative scheme.
Our proposed methods can be extended to
other types of neural network approximation as well, for example those in \cite{weinan2017,sirignano2018,long2019}.

This paper is organized as follows. In Section~\ref{sec:pinn}, we introduce the PINN method for solving
partial differential equations and in Section~\ref{sec:ASM} we review the additive Schwarz method with
a relaxation parameter and its convergence analysis in a Hilbert space.
In Section~\ref{sec:dd:pinn}, we propose an one-level iterative scheme for
the partitioned neural network approximate solution.
The iterative scheme is further extended into a two-level iterative method by including a coarse neural network to
the partitioned neural network approximation.
For the proposed iterative methods, their convergence results
are proved with an assumption on local and coarse neural network solution errors.
In Section~\ref{sec:numerics}, numerical results are presented for test examples
and conclusions are given in Section~\ref{sec:conclude}.


\section{Physics informed neural network (PINN) method}\label{sec:pinn}
Among many successful neural network approximation methods for solutions of partial differential equations,
we introduce the physics-informed neural network (PINN) method
that is developed based on minimizing the residual loss of the differential equations, see \cite{raissi2019}.
We first consider a general differential equation with a boundary condition,
\begin{equation}\label{eq:diff_eq}
\begin{aligned}
\mathcal{L}(u)&=f, \quad \text{in } \Omega, \\
\mathcal{B}(u)&=g, \quad \text{on } \partial\Omega,
\end{aligned}
\end{equation}
where $\mathcal{L}$ is a differential operator defined for a function $u$ and $\mathcal{B}$ describes a given boundary condition on $u$, and $f,g$ are given functions.
We assume that the model problem in \eqref{eq:diff_eq} is well-posed and the solution $u$ exists.

The solution $u$ in \eqref{eq:diff_eq} is approximated by a fully connected neural network function, $U({\bsym x};\theta)$,
$$U({\bsym x};\theta)=  W_L \sigma (W_{L-1} \sigma (\cdots \sigma( W_1 {\bsym x} + b_1 ) \cdots ) + b_{L-1} )
+ b_L,$$
where $\theta=(W_1,W_2,\cdots,W_L,b_1,b_2,\cdots,b_L)$ and $L$ denotes the number of layers.
The parameters $W_k \in R^{n_{k} \times n_{k-1} }$ and $b_k \in R^{n_{k}}$ are set with
their dimension as follows: For $n_0$ and $n_L$, they are set to
the dimension of the input and the dimension of the output and for $n_k$ in the $k$-th hidden layer, $k=1,\cdots,L-1$, they can be set freely depending on the required accuracy and the complexity of the model problem.
We also note that the activation function is applied on the output of each hidden layer
and is denoted by $\sigma(x)$ in the above expression, see also Fig.~\ref{fig:neural_network}
for an example of a fully connected neural network.
\begin{figure}[ht!]
	\centering
	\includegraphics[width=0.6\textwidth]{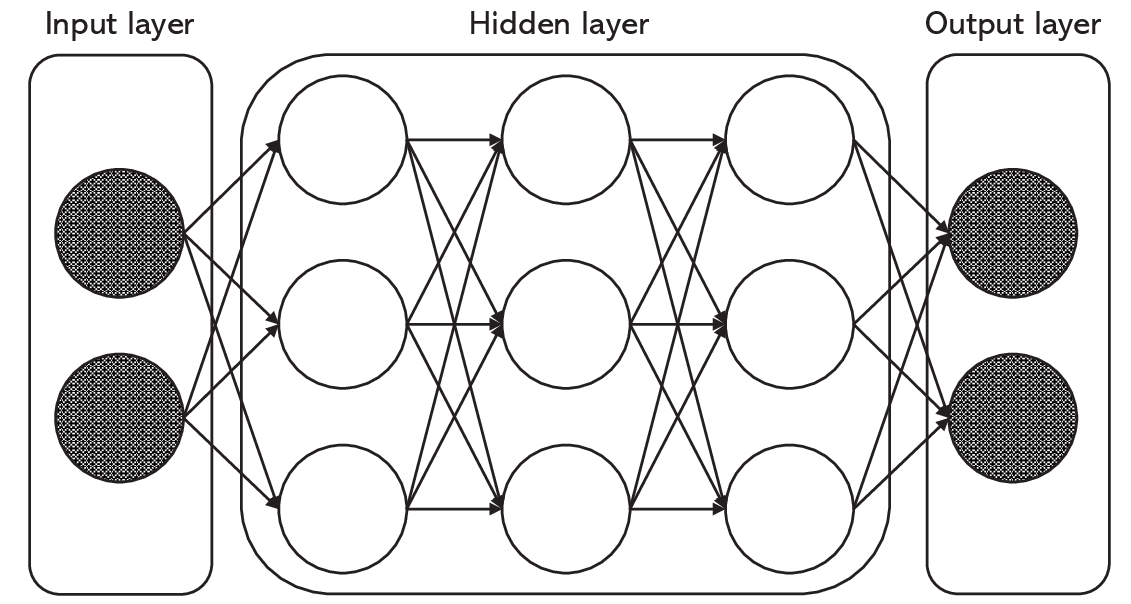}
	\caption{An example of a fully connected neural network with three hidden layers and three neurons per hidden layer.}
	\label{fig:neural_network}
\end{figure}

The parameters $\theta$ in $U({\bsym x};\theta)$ are then determined
to minimize the cost function $\mathcal{J}_X(\theta)$ consisting of two terms,
\begin{align}\label{loss:two-terms}
\mathcal{J}_X(\theta)=w_{I} \mathcal{J}_{X_\Omega}(\theta)+w_{B} \mathcal{J}_{X_{\partial\Omega}}(\theta),
\end{align}
where
\begin{align*}
\mathcal{J}_{X_{\Omega}}(\theta)=\frac{1}{|X_{\Omega}|}
\sum_{ {\bsym x} \in X_{\Omega} }|\mathcal{L}(U({\bsym{x}};\theta))-f({\bsym{x}})|^2
\end{align*}
and
\begin{align*}
\mathcal{J}_{X_{\partial\Omega}}(\theta)=\frac{1}{|X_{\partial\Omega}|}\sum_{ {\bsym{x}} \in X_{\partial\Omega}}|\mathcal{B}(U({\bsym{x}};\theta))-g({\bsym{x}})|^2.
\end{align*}
In the above, $X_{D}$ denotes the collection of points chosen from the region $D$ and $|X_{D}|$ denotes
the number of points in the set $X_D$, and $w_I$ and $w_B$ are weight factors associated
with the differential equation loss term and the boundary condition loss term, respectively.
The loss functions $\mathcal{J}_{X_\Omega}(\theta)$ and $\mathcal{J}_{X_{\partial\Omega}}(\theta)$
are designed so that the optimized neural network $U({\bsym{x}};\theta)$ satisfies
the equations in \eqref{eq:diff_eq}, derived from physics laws.
We note that $\mathcal{J}_{X_\Omega}(\theta)$ and $\mathcal{J}_{X_{\partial\Omega}}(\theta)$
approximate
$$\mathcal{J}_{\Omega}(\theta)=\int_{\Omega} (\mathcal{L}(U({\bsym{x}};\theta))-f({\bsym{x}})^2 \, d{\bsym{x}}
\quad \text{and} \quad
\mathcal{J}_{\partial \Omega}(\theta)=\int_{\partial \Omega} (U({\bsym{x}};\theta)-g({\bsym{x}}))^2 \, ds({\bsym{x}}),$$
and the accuracy and training performance in the neural network approximation $U({\bsym{x}},\theta)$
depend on the choice of the data sets $X_{\Omega}$ and $X_{\partial \Omega}$.
There are other options for the cost function, for an example, when $\mathcal{L}(u)=-\triangle u$
the following energy-based cost function was considered in \cite{yu2018deep},
$$\mathcal{J}(\theta)=\frac{1}{2} \int_{\Omega} |\nabla U({\bsym{x}};\theta)|^2 \, d{\bsym{x}}
-\int_{\Omega} f(x) U({\bsym{x}};\theta) \, d{\bsym{x}} +  \int_{\partial \Omega} |U({\bsym{x}};\theta)-g({\bsym{x}})|^2 \, ds({\bsym{x}}).$$

Regarding the accuracy of the neural network approximation solution $U({\bsym{x}};\theta)$,
it depends on the following hyper parameter settings, the number of layers $L$,
the number of nodes in each hidden layer, the training data sets $X_{\Omega}$, $X_{\partial \Omega}$,
the weight factors $w_I$, $w_B$, and optimization methods for the parameter training.
In a more detail, the error between the exact solution $u({\bsym{x}})$ and the resulting trained neural network $U({\bsym{x}};\theta_t)$
consists of the following three terms,
$$u({\bsym{x}})-U({\bsym{x}};\theta_t)=(u({\bsym{x}})-U({\bsym{x}};\theta_{a}))
+(U({\bsym{x}};\theta_a)-U({\bsym{x}};\theta_e))+(U({\bsym{x}};\theta_e)-U({\bsym{x}};\theta_t)),$$
where $U({\bsym{x}};\theta_a)$ denotes the neural network function with the minimum error to the exact solution,
and
$U({\bsym{x}};\theta_e)$ denotes the neural network function that optimizes the cost function $\mathcal{J}_X(\theta)$
in \eqref{loss:two-terms}.
The first term is called the approximation error, the second the estimation error, and the third
the optimization error.
The approximation error depends on the number of layers $L$, the number of nodes $n_k$ in each layer, and the activation function $\sigma(x)$, see \cite{hornik1989,hornik1991,pinkus1999}.
The estimation error depends on the number of training data, $|X_{\Omega}|$ and $|X_{\partial \Omega}|$,
the choice of data sets, $X_{\Omega}$, and $X_{\partial \Omega}$, and the weight factors $w_I$, $w_B$ in \eqref{loss:two-terms}.
The optimization error also depends on all the previously mentioned hyper parameters, the optimization method,
the learning rate, and the parameter initialization.

To reduce the approximation error, one needs to employ a massive layer or deeper network.
Since such a lager network is prone to the overfitting problem and the gradients of the cost function approaches vanishingly small as repeating multiplication, the estimation error and optimization error often get
larger and the parameter training process easily fails to give successful training results.
To overcome this limitation, partitioned neural networks can be used to approximate the solution $u({\bsym{x}})$.
As in earlier pioneering works~\cite{Cpinn,Xpinn,FBpinn}, the neural network approximation is formed
with localized neural network functions built on a partition of the problem domain and
the parameters in each local neural network function are trained by using a suitably formed cost function
with additional interface conditions.
Such a partitioned neural network approach allows a more flexible design of hyper parameter settings
and cost functions to give better training results than in the single neural network approach.
In addition, the localized nature of the partitioned neural network function makes
a fully parallel parameter training possible.
On the other hand, the parameter optimization process in the partitioned neural network approach~\cite{Cpinn,Xpinn,FBpinn}
needs heavy communication cost between neighboring subdomains, i.e., the neighboring subdomains exchange their parameter information every training epoch.
To resolve this heavy communication cost problem, we will develop iteration algorithms for training parameters
in the partitioned neural networks based on
the classical additive Schwarz method.
We note that in our proposed iteration algorithms
local parameters in each subdomain neural network are trained to approximate a local problem solution defined on the subdomain for the given current iterate, and these trained parameters are then communicated between the neighboring subdomains before proceeding to the next outer iteration.
The number of outer iteration is much smaller than the number of training epochs 
and the communication cost is greatly reduced in the proposed iteration algorithms,
that lead to a scalable parameter training procedure for the partitioned neural network approximation.

%
%

\section{Additive Schwarz algorithm}\label{sec:ASM}
In this section, we introduce a classical additive Schwarz method as an iterative
procedure for finding the solution of the following
model elliptic problem in a domain $\Omega$,
\begin{equation}\label{model:elliptic}
\begin{split}
-\triangle u &= f \text{ in } \Omega,\\
u&=g \text{ on } \Omega.
\end{split}
\end{equation}
For that, an overlapping subdomain partition $\{\Omega_i\}_i$ of $\Omega$
with an overlapping width $\delta$ is introduced
and the iteration procedure is given as follows:
for a given $u^{(n)}$, the following problem in each subdomain $\Omega_i$ is solved for $u_i^{(n+1)}$,
\begin{equation}\label{local:elliptic}
\begin{split}
-\triangle u_i^{(n+1)}&=f \text{ in } \Omega_i,\\
u_i^{(n+1)}&=u^{(n)} \text{ in } \overline{\Omega} \setminus \Omega_i.
\end{split}
\end{equation}
After solving for $u_i^{(n+1)}$, the next iterate is obtained as
\begin{equation}\label{iterate:formula}
u^{(n+1)}=(1-N \tau ) u^{(n)} + \tau \sum_{i=1}^N u_i^{(n+1)},
\end{equation}
where $N$ denotes the number of subdomains and $\tau$ is a relaxation parameter.
Let $N_c$ be the maximum number of subdomains sharing the same geometric position in $\Omega$.
With $0< \tau \le 1/N_c$, the iterates $u^{(n)}$ converge to the solution $u$ of \eqref{model:elliptic}
under a suitably chosen space of functions, see~\cite{OS-Xu,TW-Book,OS-Martin,OS-Park}.

For the sake of completeness, we now present a more refined convergence analysis
for the above additive Schwarz method under a Hilbert space setting
by adapting the notations and ideas in \cite{badea2000additive}.

\subsection{Projection onto a Hilbert space}
Let $V$ be a Hilbert space with an inner product $(\cdot,\cdot)_V$ and
$a(\cdot,\cdot)$ be a bilinear form defined for $V \times V$, that is related to the model elliptic problem, i.e.,
$$a(v,w):=\int_{\Omega} \nabla v \cdot \nabla w \, dx. $$
We will show that the iterates $u^{(n)}$ in the above additive Schwarz scheme converge to the exact solution $u$
in the Hilbert space $V:=H_0^1(\Omega)$, where $H_0^1(\Omega)$ consists of square integrable functions
up to the first weak derivatives with the vanishing boundary value.
Accordingly, the local Hilbert space $V_i$ is given as $V_i:=H_0^1(\Omega_i)$.
When $v_i$ is in the space $V_i$, by extending it zero into $\Omega$, we can consider $v_i$
as a function in $V$.
In the presentation below, for simplicity
we assume that functions in $V_i$ are extended by zero so that
they also belong to the space $V$.

For the local problem in \eqref{local:elliptic}, we can form it as
$$a(u_i^{(n+1)}-u^{(n)},v_i)=f(v_i)-a(u^{(n)},v_i),\; \forall v_i \in V_i$$
and for the solution $u$, we obtain that
$$a(u,v) = f(v),\; \forall v \in V,$$
where
$$f(v):=\int_{\Omega} f v \, dx.$$

We then obtain
$$a(u_i^{(n+1)}-u^{(n)},v_i)=a(u-u^{(n)},v_i),\; \forall v_i \in V_i,$$
and
$$u_i^{(n+1)}-u^{(n)}=P_i (u-u^{(n)}),$$
where
$P_i$ is the projection onto the subspace $V_i$ defined as
$$a(P_i(v),v_i)=a(v,v_i),\; \forall v_i \in V_i.$$
We note that $u_i^{(n+1)}-u^{(n)}$ is in the space $V_i$.
The iterate $u^{(n+1)}$ in \eqref{iterate:formula} is then obtained as
$$u^{(n+1)}=u^{(n)}+\tau \sum_{i=1}^N (u_i^{(n+1)}-u^{(n)})=u^{(n)}+\tau \sum_{i=1}^N P_i (u-u^{(n)})$$
and the resulting error equation then follows
\begin{equation}\label{ASM:iter:err}
u-u^{(n+1)}=(I-\tau \sum_{i=1}^N P_i ) (u-u^{(n)}).
\end{equation}

\subsection{Convergence analysis}
We will now present the convergence analysis of the additive Schwarz method.
Our analysis is based on the derivation in \cite{badea2000additive}, where
the additive Schwarz algorithm is applied and analyzed for variational inequalities.
We first introduce the following two assumptions:

\begin{assumption}\label{assume1}
For $v$ in $V$, there exist $v_i$ in $V_i$ such that
$$v=\sum_{i=1}^N v_i \; \text{ and } \; \sum_{i=1}^N a(v_i,v_i) \le C_0 a(v,v),$$
where the constant $C_0$ is independent of $v$.
\end{assumption}

\begin{assumption}\label{assume2}
For $v_i$ in $V_i$ and $v_j$ in $V_j$, there exists $0 \le \mathcal{E}_{ij} \le 1$ such that
$$a(v_i,v_j) \le \mathcal{E}_{ij} a(v_i,v_i)^{1/2} a(v_j,v_j)^{1/2}.$$
\end{assumption}

It is well known that
the above two assumptions were introduced for analysis of the classical Schwarz methods,
see \cite{TW-Book}.

\begin{theorem}\label{thm:conv:factor}
Under the Assumptions~\ref{assume1} and \ref{assume2}, the iterates $u^{(n)}$ in the Hilbert space satisfy
$$a(u-u^{(n+1)},u-u^{(n+1)}) \le  \left( 1-\frac{2}{2+C_0} \tau + \| \mathcal{E} \|_{2}^2 \tau^2 \right) a(u-u^{(n)},u-u^{(n)}),$$
where $C_0$ is the constant in Assumption~\ref{assume1} and $\| \mathcal{E} \|_{2}$ is the 2-norm
for the matrix $\mathcal{E}$ with $\mathcal{E}_{ij}$ in Assumption~\ref{assume2} as its entries.
\end{theorem}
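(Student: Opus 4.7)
The natural plan is to start from the error recursion $u - u^{(n+1)} = (I - \tau P)(u - u^{(n)})$ with $P := \sum_{i=1}^N P_i$, already derived in \eqref{ASM:iter:err}, and expand the $a$-norm as a quadratic in $\tau$: with $e := u - u^{(n)}$,
\begin{equation*}
a(e - \tau P e,\, e - \tau P e) = a(e, e) - 2\tau\, a(P e, e) + \tau^2\, a(P e, P e).
\end{equation*}
Everything then hinges on a lower bound for $a(Pe, e)$ and an upper bound for $a(Pe, Pe)$, each in terms of $a(e, e)$, coming from Assumptions~\ref{assume1} and \ref{assume2} respectively.

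For the cross term I would use that $P_i e \in V_i$ together with the defining identity $a(P_i e, v_i) = a(e, v_i)$ (taking $v_i = P_i e$) to rewrite $a(Pe, e) = \sum_i a(P_i e, P_i e)$. Taking the stable decomposition $e = \sum_i e_i$ from Assumption~\ref{assume1} and applying the projection identity once more with $v_i = e_i$, I would express $a(e, e) = \sum_i a(e_i, e) = \sum_i a(P_i e, e_i)$, then apply Cauchy--Schwarz twice (first on $a$, then on the index sum) to obtain
\begin{equation*}
a(e, e)^2 \le \Bigl( \sum_i a(e_i, e_i) \Bigr) \Bigl( \sum_i a(P_i e, P_i e) \Bigr) \le C_0\, a(e, e)\, a(Pe, e),
\end{equation*}
so $a(Pe, e) \ge a(e, e)/C_0 \ge a(e, e)/(2 + C_0)$. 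The final (slightly wasteful) relaxation is what produces the constant $2/(2+C_0)$ appearing in the statement.

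For the quadratic term I would expand $a(Pe, Pe) = \sum_{i,j} a(P_i e, P_j e)$, apply Assumption~\ref{assume2} entry-wise so that $a(Pe, Pe) \le x^{\top} \mathcal{E}\, x$ with $x_i := a(P_i e, P_i e)^{1/2}$, and then use $x^{\top} \mathcal{E}\, x \le \|\mathcal{E}\|_2 \|x\|^2 = \|\mathcal{E}\|_2\, a(Pe, e)$. A further Cauchy--Schwarz step $a(Pe, e) \le a(Pe, Pe)^{1/2} a(e, e)^{1/2}$ feeds back into this inequality and, after squaring, gives $a(Pe, Pe) \le \|\mathcal{E}\|_2^2\, a(e, e)$. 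Substituting the two bounds into the expansion and dividing by $a(e, e)$ then produces the stated contraction factor.

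The main obstacle, as I see it, is the self-improvement step in the quadratic bound: the direct application of Assumption~\ref{assume2} only delivers an $\|\mathcal{E}\|_2$ factor against $a(Pe, e)$, and one must resist stopping there and instead notice that Cauchy--Schwarz allows this to be promoted to the $\|\mathcal{E}\|_2^2\, a(e, e)$ bound demanded by the theorem. Everything else is mechanical; the mild slack in replacing $1/C_0$ by $1/(2+C_0)$ is presumably retained to align with the variational-inequality framework of \cite{badea2000additive}, whose notation the authors follow.
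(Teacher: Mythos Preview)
Your proposal is correct and follows essentially the same route as the paper: expand the error recursion as a quadratic in $\tau$, bound $a(Pe,e)$ from below via Assumption~\ref{assume1}, and bound $a(Pe,Pe)$ from above via Assumption~\ref{assume2} together with the self-improvement step $a(Pe,e)\le a(Pe,Pe)^{1/2}a(e,e)^{1/2}$, exactly as in the paper's \eqref{third-term:0}--\eqref{third-term}. The only cosmetic difference is in the lower bound: you apply Cauchy--Schwarz directly to obtain the sharper inequality $a(Pe,e)\ge a(e,e)/C_0$ and then relax to $1/(2+C_0)$, whereas the paper inserts a Young-type step $\sqrt{AB}\le \tfrac12 A+\tfrac12 B$ in \eqref{ineq1}--\eqref{ineq2}, which produces the constant $2+C_0$ intrinsically rather than by throwing away an additive $2$.
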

\begin{proof}
For $v$ in $V$, we take its decomposition $\{v_i\}$ such that $\sum_{i=1}^N v_i=v$ and satisfies
Assumption~\ref{assume1}.
We then consider
\begin{align}
\sum_{i=1}^N a(P_i v, v_i) & \le \sum_{i=1}^N a(P_i v, P_i v)^{1/2} a(v_i,v_i)^{1/2} \nonumber \\
& \le \left( \sum_{i=1}^N a(v,P_i v) \right)^{1/2} \left( \sum_{i=1}^N a(v_i,v_i) \right)^{1/2} \nonumber \\
& \le \left( \sum_{i=1}^N a(v,P_i v) \right)^{1/2} \sqrt{C_0} a(v,v)^{1/2} \nonumber \\
& \le \frac{C_0}{2} \sum_{i=1}^N a(v,P_i v) + \frac{1}{2} a(v,v) \label{ineq1},
\end{align}
where we used the Assumption~\ref{assume1} in the third inequality.
We also note that
\begin{align}
a(v,v) &=\sum_{i=1}^N a(v,v_i)=\sum_{i=1}^N a(v,v_i-P_i v) + \sum_{i=1}^N a(v,P_i v) \nonumber \\
&=\sum_{i=1}^N a(P_i v, v_i-P_i v)+\sum_{i=1}^N a(v,P_i v) \le \sum_{i=1}^N a(P_i v, v_i) + \sum_{i=1}^N a(v,P_i v) \nonumber \\
&\le \frac{C_0}{2} \sum_{i=1}^N a(v,P_i v) + \frac{1}{2} a(v,v) + \sum_{i=1}^N a(v,P_iv),\label{ineq2}
\end{align}
where we used the inequality~\eqref{ineq1} in the above last inequality.
By \eqref{ineq2}, we obtain that
\begin{equation}\label{ineq3}
a(v,v) \le (2 + C_0 ) \sum_{i=1}^N a(v,P_iv).
\end{equation}

We let $P=\sum_{i=1}^N P_i$ in \eqref{ASM:iter:err} and consider
\begin{align}\label{three-parts}
&a(u-u^{(n+1)},u-u^{(n+1)})\nonumber \\
&=a((I-\tau P) (u-u^{(n)}), (I-\tau P) (u-u^{(n)})) \nonumber \\
&=a(u-u^{(n)},u-u^{(n)})-2\tau a(u-u^{(n)},P(u-u^{(n)}))+\tau^2 a(P(u-u^{(n)}),P(u-u^{(n)})).
\end{align}

For the second term in \eqref{three-parts}, letting $v=u-u^{(n)}$ in \eqref{ineq3}, we obtain that
\begin{align}
a(u-u^{(n)},P(u-u^{(n)}))&=\sum_{i=1}^N a(u-u^{(n)},P_i (u-u^{(n)}))\nonumber\\
&\ge \frac{1}{2+C_0} a(u-u^{(n)},u-u^{(n)}).\label{second-term}
\end{align}

For the third term in \eqref{three-parts}, by using the Assumption~\ref{assume2},
\begin{align}
&a(P(u-u^{(n)}),P(u-u^{(n)})) \nonumber\\
&=\sum_{i,j} a(P_i(u-u^{(n)}),P_j(u-u^{(n)})) \nonumber \\
&\le \sum_{i,j} \mathcal{E}_{ij} a(P_i (u-u^{(n)}), P_i (u-u^{(n)}))^{1/2}a(P_j (u-u^{(n)}), P_j (u-u^{(n)}))^{1/2}
\nonumber \\
&\le \| \mathcal{E} \|_2 \| {\bf q} \|_2^2, \label{third-term:0}
\end{align}
where $\mathcal{E}$ denotes the matrix with its entries $\mathcal{E}_{ij}$, ${\bf q}$ is the vector
with its entries $a(P_i (u-u^{(n)}), P_i (u-u^{(n)}))^{1/2}$, and $\|\cdot\|_2$ denotes the matrix or vector 2-norm.
We note that
\begin{align}
\| {\bf q} \|_2^2 &=\sum_{i=1}^N a(P_i(u-u^{(n)}),P_i(u-u^{(n)}))\nonumber \\
&=\sum_{i=1}^N a(P_i(u-u^{(n)}),u-u^{(n)})\nonumber \\
&=a(P(u-u^{(n)}),u-u^{(n)})\nonumber \\
&\le a(P(u-u^{(n)}),P(u-u^{(n)}))^{1/2} a(u-u^{(n)},u-u^{(n)})^{1/2}. \label{ineq:q}
\end{align}
Combining \eqref{third-term:0} with \eqref{ineq:q}, we obtain that
\begin{equation}\label{third-term}
a(P(u-u^{(n)}),P(u-u^{(n)})) \le \| \mathcal{E} \|_{2}^2\, a(u-u^{(n)},u-u^{(n)}).
\end{equation}

Combining \eqref{three-parts} with the inequalities in \eqref{second-term} and \eqref{third-term},
we finally obtain the resulting estimate,
\begin{align}
&a(u-u^{(n+1)},u-u^{(n+1)})\nonumber \\
&\le a(u-u^{(n)},u-u^{(n)}) - \frac{2}{2+C_0} \tau a(u-u^{(n)},u-u^{(n)})
+\|\mathcal{E}\|_2^2 \tau^2 a(u-u^{(n)},u-u^{(n)})\nonumber \\
&=\left( 1-\frac{2}{2+C_0} \tau + \| \mathcal{E} \|_2^2 \tau^2 \right) a(u-u^{(n)},u-u^{(n)}). \label{result:error:factor}
\end{align}
\end{proof}

For the matrix norm $\| \mathcal{E} \|_2$, we can obtain that
$$\| \mathcal{E} \|_2 \le N_c,$$
where $N_c$ is the maximum number of subdomains sharing the same position $x$ in $\Omega$, i.e.,
the number of colors in the coloring arguments, see~\cite[Lemma 3.11]{TW-Book}.
Letting
$$R(\tau)=1-\frac{2}{2+C_0} \tau +N_c^2 \tau^2,$$
the error reduction factor is bounded by $R(\tau)$;
\begin{equation}\label{error:reduction}
a(u-u^{(n+1)},u-u^{(n+1)}) \le R(\tau) a(u-u^{(n)},u-u^{(n)}).
\end{equation}
We note that when $\tau=1/(N_c^2 (2+C_0))$, $R(\tau)$ achieves its minimum
value,
\begin{equation}\label{error:reduction:mim}
\min_{\tau} R(\tau)=1-\frac{1}{N_c^2 (2+C_0)^2}.
\end{equation}

We now introduce a partition of unity functions, $\{ \phi_i(x)\}_i$, such that
$$\sum_i \phi_i(x)=1$$
and $\phi_i(x)( \ge 0)$ vanishes outside the subdomain $\Omega_i$.
Let $V$ be a subspace of $H_0^1(\Omega)$.
Regarding to the constant $C_0$ in Assumption~\ref{assume1}, for $v$ in $V$ we can obtain, see~\cite[Lemma 3.11, page 69]{TW-Book},
\begin{equation}\label{estimate:C0}
\sum_{i=1} a(v_i,v_i) \le C N_c \left( 2 + (N+1) \frac{H}{\delta} \right) a(v,v),
\end{equation}
where $v_i=\phi_i v$ with $\phi_i$ being the partition of unity functions,
$\delta$ is the maximum overlapping width in the subdomain partition,
$H$ is the maximum subdomain diameter,
$N$ is the number of subdomains in the partition,
and $C$ is a constant independent of the function $v$, and
the aforementioned parameters $\delta$, $H$, and $N$.
With $C_0=C N_c \left( 2 + (N+1) H / \delta \right)$, the Assumption~\ref{assume1} holds for $v$ in $V$.

When $H/\delta$ and $N_c$ are fixed, as increasing $N$ the constant $C_0$ follows the growth of
$N_c N H/\delta$,
$$C_0 \simeq N_c N \frac{H}{\delta}$$
and putting it into the error reduction rate, we obtain;
As $N$ increases, with given fixed $H / \delta$ and $N_c$, the error reduction rate follows the growth,
\begin{equation}\label{final:reduction:estimate}
\min_\tau R(\tau) \simeq 1- \frac{1}{N_c^4 N^2 ({H}/{\delta})^2},
\end{equation}
where we used
$$N_c^2 (2+C_0)^2 \simeq N_c^2 (N_c N H/\delta)^2 \simeq N_c^4 N^2 (H/\delta)^2.$$

By \eqref{final:reduction:estimate}, we can conclude that
the error reduction rate increases to the value 1 as $N$ increases.
The convergence of the proposed additive Schwarz algorithm gets slower as more subdomains are in
the partition with a fixed ratio of the overlapping width, i.e., $H/\delta$ being fixed.
We note that by including a coarse component $v_0$ in a coarse subspace $V_0$ of $V$ to the decomposition
we can obtain the constant $C_0$ to be independent of the number of subdomains $N$.
With the addition of the coarse component, the above additive Schwarz iteration method can be extended
into a two-level algorithm to speed up the iteration convergence as more subdomains in the partition.

\section{Iterative algorithms by neural network approximate solutions}\label{sec:dd:pinn}
In this section, we will propose iteration algorithms for partitioned neural network approximation
to the model elliptic problem introduced in the previous section.
Our iteration algorithms are based on the additive Schwarz algorithm.
In the following we will propose one-level and two-level iterative algorithms.

\subsection{One-level algorithm}\label{subsec:one-level}
In this subsection, we introduce the one-level iterative algorithm for partitioned neural network approximation
to the model elliptic problem.
In the iteration algorithm, starting with an initial neural network $U^{(0)}$, we will solve local problems
with the given boundary value by $U^{(0)}$ and update the iterate $U^{(1)}$ to proceed the next iterate.
In general, at the outer iteration $n+1$, our partitioned neural network solution is obtained as,
\begin{equation}\label{def:hatU}
\widehat{U}^{(n+1)}(x):=\frac{1}{|s(x)|} \sum_{i \in s(x)} U_i(x;\theta_i^{(n+1)}),
\end{equation}
where $s(x)$ is the set of subdomain indices sharing $x$, $|s(x)|$ is the number of elements in the set,
and $U_i(x;\theta_i^{(n+1)})$ is the trained local neural network solution with the given boundary value $U^{(n)}$.
The iterate $U^{(n)}$ is then updated into $U^{(n+1)}$ to proceed the next iteration:
\begin{equation}\label{one-level:update:Un}
U^{(n+1)}(x)=(1-\tau |s(x)| ) U^{(n)}(x) + \tau |s(x)| \widehat{U}^{(n+1)}(x),
\end{equation}
where $\tau$ is the relaxation parameter, introduced in the additive Schwarz algorithm,
and $\widehat{U}^{(n+1)}(x)$ is the partitioned neural network solution.

The one-level algorithm is then listed as follows:

{\bf Algorithm 1: One-level method} (input: $U^{(0)}$, output: $\widehat{U}^{(n+1)}$)

{\bf Step 0}: Let $U^{(0)}(x)$ be given and $n=0$.

{\bf Step 1}: Find $\theta_i^{(n+1)}$ in $U_i(x;\theta_i^{(n+1)})$ for the local problem in each subdomain $\Omega_i$,
\begin{equation}\label{local:pb}
\begin{split}
-\triangle u&=f \;\text{ in } \Omega_i, \\
u&=U^{(n)}\; \text{ on } \partial \Omega_i.
\end{split}
\end{equation}

{\bf Step 2}: Update $U^{(n+1)}$ at each data set $X_{\partial \Omega_i}$, see \eqref{one-level:update:Un}.

{\bf Step 3}: Go to {\bf Step 1} with $n=n+1$ or set the output as $\widehat{U}^{(n+1)}$ if the stopping condition is met.

\vspace{0.2cm}

We note that in Step 1 of Algorithm 1 the parameter $\theta_i^{(n+1)}$ is optimized for the following cost function,
$$\mathcal{J}_{i,X}(\theta):=w_{I,i} \frac{1}{|X_{\Omega_i}|} \sum_{x \in X_{\Omega_i}}(\triangle U_i(x;\theta)+f(x))^2
+w_{B,i} \frac{1}{|X_{\partial \Omega_i}|} \sum_{x \in X_{\partial \Omega_i}} (U_i(x;\theta)-U^{(n)}(x))^2,$$
where $X_{\Omega_i}$ and $X_{\partial \Omega_i}$ are training data sets for the differential equation
and the boundary condition, respectively, and $w_{I,i}$ and $w_{B,i}$ are weight factors.
In Step 2, we update the values of $U^{(n+1)}(x)$ only at the data set $X_{\partial \Omega_i}$, $i=1,\cdots,N$, that are needed for Step 1 in the next outer iteration.
We also note that we need the data communication between neighboring subdomains when updating the values $U^{(n+1)}(x)$.

We will now prove that the partitioned neural network $\widehat{U}^{(n)}$ converge to the exact solution $u$.
For that, we will first show that $U^{(n)}$ converge to the exact solution $u$.
We recall the iteration formula in \eqref{iterate:formula} and the local problem in \eqref{local:elliptic}.
By extending the local neural network solutions to the domain $\Omega$,
we set the functions $U^{(n+1)}_i(x)$ as
\begin{equation}\label{def:Ui}
U^{(n+1)}_i(x)=U_i(x;\theta_i^{(n+1)}),\, \forall x \in \Omega_i,\quad
U^{(n+1)}_i(x)=U^{(n)}(x),\, \forall x \in \Omega \setminus \Omega_i
\end{equation}
and using them we can express the iterate $U^{(n+1)}$ as
\begin{equation}\label{U:iter}
U^{(n+1)}(x)=(1-\tau N)U^{(n)}(x)+\tau \sum_{i=1}^N U_i^{(n+1)}(x).
\end{equation}

We now introduce $|\cdot|_{1,\Omega_i}$ as the seminorm in the Hilbert space $V_i=H^1(\Omega_i)$
associated with the bilinear form $a_i(u_i,v_i):=\int_{\Omega_i} \nabla u_i \cdot \nabla v_i \, dx$
and
we introduce the following assumption on the neural network solution:
\begin{assumption}\label{assume3}
Let $\tilde{u}_i$ and $\tilde{U}_i$ be the Hilbert space solution and the neural network solution to the local problem~\eqref{local:pb}.
The error between $\tilde{U}_i$ and the Hilbert space solution $\tilde{u}_i$
is bounded by a sufficiently small $\epsilon$:
$$|\tilde{u}_i(x)-\tilde{U}_i(x)|_{1,\Omega_i} \le \frac{\epsilon}{N},$$
where $N$ denotes the number of subdomains in the partition.
\end{assumption}
We note that local Hilbert space solution $\tilde{u}_i$ satisfy
$\tilde{u}_i-U^{(n)}=P_i(u-U^{(n)})$ with $P_i$ as the projection onto the Hilbert space $H_0^1(\Omega_i)$.
For the neural network solution $\tilde{U}_i$, its error to the Hilbert space solution $\tilde{u}_i$
is affected by the approximation error, estimation error, and optimization error.
Under our settings, as $N$ gets larger, the subdomain size gets smaller and the solution $\widetilde{u}_i$
in such a smaller subdomain has relatively less variations, and it thus can be well approximated
by a neural network function with a moderate number of parameters, in practice.
With this heuristic explanation, we can say that the approximation error satisfies the above assumption.
We note that the resulting neural network solution $\tilde{U}_i$ is also affected by the estimation error and optimization error,
and such additional errors need to be well-controlled by setting suitable hyper parameters, so as to
make the Assumption~\ref{assume3} valid for the neural network solution $\tilde{U}_i$.

We now prove that $U^{(n)}$ converge to the exact solution $u(x)$ in the following theorem:
\begin{theorem}\label{thm:conv:one-level:nn}
Under the Assumptions~\ref{assume1}, \ref{assume2}, and \ref{assume3}, the iterates $U^{(n)}$
satisfy the following error estimate,
$$|u-U^{(n+1)}|_1 \le |u-u^{(n+1)}|_1 + \frac{1}{1-C_1} \epsilon,$$
where $\epsilon$ is the local neural network solution error in Assumption~\ref{assume3}
and the constant $C_1$ is the error reduction factor in the one-level additive Schwarz algorithm
for the iterates $u^{(n)}$ that are obtained from the same initial $u^{(0)}=U^{(0)}$.
\end{theorem}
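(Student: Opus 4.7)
My plan is to control the gap $|U^{(n+1)}-u^{(n+1)}|_1$ between the neural-network iterate and the Hilbert-space iterate started from the same initial data, rather than to estimate $|u-U^{(n+1)}|_1$ head-on. Once this gap satisfies a one-step contraction with additive error at most $\epsilon$, the theorem follows from a geometric sum together with a single triangle inequality.

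First, for each subdomain $\Omega_i$ I would introduce the auxiliary function $\hat{u}_i^{(n+1)}$ defined as the exact Hilbert-space local solution obtained from the \emph{neural-network} previous iterate $U^{(n)}$ as boundary data, so that $\hat{u}_i^{(n+1)}-U^{(n)}=P_i(u-U^{(n)})$. By Assumption~\ref{assume3}, the neural-network approximation error $\eta_i:=U_i^{(n+1)}-\hat{u}_i^{(n+1)}$ satisfies $|\eta_i|_{1,\Omega_i}\le\epsilon/N$, and $\eta_i$ vanishes outside $\Omega_i$ since both $U_i^{(n+1)}$ and $\hat{u}_i^{(n+1)}$ equal $U^{(n)}$ there by construction. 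Using $u_i^{(n+1)}=u^{(n)}+P_i(u-u^{(n)})$ and the analogous formula for $\hat{u}_i^{(n+1)}$, a short rearrangement gives
$$\sum_{i=1}^{N}(U_i^{(n+1)}-u_i^{(n+1)})=\sum_{i=1}^{N}\eta_i+N(U^{(n)}-u^{(n)})-P(U^{(n)}-u^{(n)}),$$
and substituting this into the two relaxation formulas and subtracting yields the clean identity
$$U^{(n+1)}-u^{(n+1)}=(I-\tau P)(U^{(n)}-u^{(n)})+\tau\sum_{i=1}^{N}\eta_i.$$

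Next, I would invoke the fact that the proof of Theorem~\ref{thm:conv:factor} actually establishes the stronger property that $(I-\tau P)$ contracts every element of $V$ in the energy norm by the factor $C_1$; applying this with $v=U^{(n)}-u^{(n)}$, together with Assumption~\ref{assume3} and $\tau\le 1/N_c\le 1$, yields the recursion
$$|U^{(n+1)}-u^{(n+1)}|_1\le C_1|U^{(n)}-u^{(n)}|_1+\tau\epsilon\le C_1|U^{(n)}-u^{(n)}|_1+\epsilon.$$
Telescoping from $U^{(0)}=u^{(0)}$ gives $|U^{(n+1)}-u^{(n+1)}|_1\le\epsilon\sum_{k=0}^{n}C_1^k\le\epsilon/(1-C_1)$, and the triangle inequality $|u-U^{(n+1)}|_1\le|u-u^{(n+1)}|_1+|u^{(n+1)}-U^{(n+1)}|_1$ finishes the proof.

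The main obstacle is the algebraic bookkeeping that yields the clean identity $U^{(n+1)}-u^{(n+1)}=(I-\tau P)(U^{(n)}-u^{(n)})+\tau\sum_i\eta_i$: one must track how the two different previous iterates propagate through the local projections, and verify that $\eta_i$ really is supported inside $\Omega_i$ so that Assumption~\ref{assume3} controls its full $|\cdot|_1$ seminorm. The $1/N$ scaling built into Assumption~\ref{assume3} is essential in the very last step, as it is precisely what prevents the sum over $N$ subdomains from inflating the per-step additive error beyond $\tau\epsilon$.
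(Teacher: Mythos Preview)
Your proposal is correct and follows essentially the same approach as the paper's proof. The paper likewise introduces the auxiliary Hilbert-space iterate $\tilde{u}^{(n+1)}$ obtained from $U^{(n)}$ (your $\hat{u}_i^{(n+1)}$ summed), bounds $|\tilde{u}^{(n+1)}-U^{(n+1)}|_1\le\epsilon$ via Assumption~\ref{assume3}, uses $|u^{(n+1)}-\tilde{u}^{(n+1)}|_1\le C_1|u^{(n)}-U^{(n)}|_1$ (which is exactly your contraction of $I-\tau P$, since $u^{(n+1)}-\tilde{u}^{(n+1)}=(I-\tau P)(u^{(n)}-U^{(n)})$), and telescopes from the common initial datum; your explicit identity $U^{(n+1)}-u^{(n+1)}=(I-\tau P)(U^{(n)}-u^{(n)})+\tau\sum_i\eta_i$ simply packages these two steps into one formula.
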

\begin{proof}
We will show the following estimate for $U^{(n+1)}$:
\begin{align}
|u-U^{(n+1)} |_1 &\le |u-u^{(n+1)}|_1 + |u^{(n+1)}-U^{(n+1)}|_1 \nonumber \\
& \le |u-u^{(n+1)}|_1 + C \epsilon, \label{error:u-Un}
\end{align}
where the constant $C$ is independent of $n$.

It then suffices to show the estimate in \eqref{error:u-Un}, i.e.,
$$|u^{(n+1)}-U^{(n+1)}|_1 \le C \epsilon,$$
with $C=1/(1-C_1)$.
For the iterate $U^{(n)}$, we define
$$\tilde{u}^{(n+1)}=(1-\tau N) U^{(n)}+\tau \sum_{i=1}^N \tilde{u}_i^{(n+1)},$$
where $\tilde{u}_i^{(n+1)}$ are the local Hilbert space solutions obtained from the given previous iterate $U^{(n)}$. We also recall the iteration formula in \eqref{U:iter}.
From the Assumption~\ref{assume3} and the iteration formula in \eqref{U:iter}, we obtain
$$|\tilde{u}^{(n+1)}-U^{(n+1)}|_1 =\left|\tau \sum_{i=1}^N \left(\widetilde{u}_i^{(n+1)}(x)-U_i^{(n+1)}(x)\right)\right|_1
\le \tau \sum_{i=1}^N \frac{\epsilon}{N} \le \epsilon.$$
For the error $|u^{(n+1)}-U^{(n+1)}|_1$, we then obtain
\begin{align*}
|u^{(n+1)}-U^{(n+1)}|_1 & \le |u^{(n+1)}-\tilde{u}^{(n+1)}|_1 + | \tilde{u}^{(n+1)}-U^{(n+1)}|_1\\
&\le C_1 |u^{(n)}-U^{(n)}|_1 + \epsilon,
\end{align*}
where we used that the error between the two solutions ${u}^{(n+1)}$ and $\tilde{u}^{(n+1)}$ in the Hilbert space
corresponding to the different previous iterates $u^{(n)}$ and $U^{(n)}$ is bounded by, see \eqref{ASM:iter:err},
$$|u^{(n+1)}-\tilde{u}^{(n+1)}|_1 =|(I-\tau P)(u^{(n)}-U^{(n)})|_1\le C_1 |u^{(n)}-U^{(n)}|_1$$
with $C_1$ being the error reduction factor of the one-level additive Schwarz algorithm in the Hilbert space.

Applying similarly for $|u^{(n)}-U^{(n)}|_1$, we obtain that
$$|u^{(n)}-U^{(n)}|_1 \le C_1 |u^{(n-1)}-U^{(n-1)}|_1 + \epsilon,$$
and finally obtain that
\begin{align}
|u^{(n+1)}-U^{(n+1)}|_1 & \le (1+C_1+C_1^2+\cdots+C_1^{(n-1)})\epsilon+ C_1^{n} |u^{(1)}-U^{(1)}|_1 \nonumber \\
& \le (1+C_1+C_1^2+\cdots+C_1^n) \epsilon \nonumber \\
& = \frac{1-C_1^{n+1}}{1-C_1} \epsilon \nonumber \\
& \le \frac{1}{1-C_1} \epsilon. \label{error:un-Un}
\end{align}
In the above,
$$|u^{(1)}-U^{(1)}|_1 \le C_1 |u^{(0)}-U^{(0)}|_1+\epsilon = \epsilon,$$
since we initialized $u^{(0)}=U^{(0)}$.
We now proved the estimate in \eqref{error:u-Un} with the constant $C$ bounded by the value $1/(1-C_1)$.
This completes the proof.
\end{proof}

We will now show that the partitioned neural network iterates $\widehat{U}^{(n+1)}$ in \eqref{def:hatU}
converge to the exact solution $u$.
Recalling the iteration formula in \eqref{one-level:update:Un}, we obtain
\begin{equation*}
u(x)-\widehat{U}^{(n+1)}(x)=\left(1-\frac{1}{\tau |s(x)| } \right)(u(x)-U^{(n)}(x)) + \frac{1}{\tau |s(x)|} (u(x)-U^{(n+1)}(x)),
\end{equation*}
and
\begin{equation}\label{convergence:one-level}
|u(x)-\widehat{U}^{(n+1)}(x) | \le \frac{1}{\tau} | u(x)-U^{(n)}(x) | + \frac{1}{\tau} | u(x)-U^{(n+1)}(x) |,
\end{equation}
where we used $|s(x)|\ge 1$ and $0 < \tau |s(x) | \le 1$.
We note that in the one-level Schwarz algorithm the relaxation parameter $\tau$ is chosen to be less than or equal to a fraction of the maximum value of $|s(x)|$, i.e., $1/N_c$, with $N_c$ being the number of colors in the partition.

From the result in Theorem~\ref{thm:conv:one-level:nn}, the iterates $U^{(n+1)}$ satisfy
$$\| u-U^{(n+1)}\|_0 \le C_p |u-U^{(n+1)}|_1 \le C_p ( |u-u^{(n+1)}|_1 + \frac{1}{1-C_1} \epsilon),$$
where $\| \cdot \|_0$ denotes the $L^2$-norm and $C_p$ is the constant in the Poincar\'{e} inequality.
Combining the above estimate with \eqref{convergence:one-level}, the iterates $\widehat{U}^{(n+1)}$ thus satisfy
\begin{align*}
\| u-\widehat{U}^{(n+1)}\|_0 &\le \frac{1}{\tau} ( \|u-U^{(n)}\|_0 + \|u-U^{(n+1)}\|_0 )\\
&\le \frac{C_p}{\tau} ( |u-u^{(n)}|_1+|u-u^{(n+1)}|_1+\frac{2}{1-C_1} \epsilon),
\end{align*}
to give the following error estimate for $\widehat{U}^{(n+1)}$ in the $L^2$-norm:
\begin{theorem}\label{cor:conv:one-level:nn:uhat}
Under the Assumptions~\ref{assume1}, \ref{assume2}, and \ref{assume3}, the partitioned neural network iterates $\widehat{U}^{(n)}$
satisfy
$$\|u-\widehat{U}^{(n+1)}\|_0 \le \frac{C_p}{\tau} \left(|u-u^{(n)}|_1 + |u-u^{(n+1)}|_1+\frac{2}{1-C_1} \epsilon \right),$$
where $\epsilon$ is the local neural network solution error in Assumption~\ref{assume3} and the constant $C_1$ is the error reduction factor for the iterates $u^{(n)}$ in the one-level additive Schwarz algorithm.
\end{theorem}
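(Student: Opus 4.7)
The plan is to combine three ingredients already assembled in the paragraphs preceding the statement: the pointwise identity linking $\widehat{U}^{(n+1)}$ to the iterates $U^{(n)}$ and $U^{(n+1)}$, the Poincar\'e inequality on $\Omega$, and the $H^1$-seminorm estimate from \thmref{thm:conv:one-level:nn}. The argument is essentially a chain of triangle inequalities, so the main work is keeping track of the coefficients rather than proving anything genuinely new.

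First I would start from the iteration formula rewritten in the form
$$U^{(n+1)}(x)=(1-|s(x)|\tau)\,U^{(n)}(x)+|s(x)|\tau\,\widehat{U}^{(n+1)}(x),$$
and solve for $u(x)-\widehat{U}^{(n+1)}(x)$ by subtracting both sides from $u(x)$ and dividing by $|s(x)|\tau$. This yields the identity already written in \eqref{convergence:one-level}, after which I use $|s(x)|\ge1$ together with the step-size restriction $0<\tau|s(x)|\le1$ (which holds since $\tau\le1/N_c$ and $|s(x)|\le N_c$) to conclude the pointwise bound
$$|u(x)-\widehat{U}^{(n+1)}(x)|\le\frac{1}{\tau}\bigl(|u(x)-U^{(n)}(x)|+|u(x)-U^{(n+1)}(x)|\bigr).$$

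Next I would integrate the square of this inequality over $\Omega$ and apply the triangle inequality in $L^2(\Omega)$ to obtain
$$\|u-\widehat{U}^{(n+1)}\|_0\le\frac{1}{\tau}\bigl(\|u-U^{(n)}\|_0+\|u-U^{(n+1)}\|_0\bigr).$$
Then I would invoke the Poincar\'e inequality $\|v\|_0\le C_p|v|_1$ for $v\in H^1_0(\Omega)$ (applicable to $u-U^{(k)}$ once the boundary data on $\partial\Omega$ are matched by the construction, as is implicit throughout the preceding discussion) to pass from $L^2$-norms on the right to $H^1$-seminorms.

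Finally I would apply \thmref{thm:conv:one-level:nn} to each of $|u-U^{(n)}|_1$ and $|u-U^{(n+1)}|_1$, bounding them respectively by $|u-u^{(n)}|_1+\epsilon/(1-C_1)$ and $|u-u^{(n+1)}|_1+\epsilon/(1-C_1)$. Collecting the two $\epsilon/(1-C_1)$ contributions gives the $2\epsilon/(1-C_1)$ factor in the stated estimate, and the prefactor $C_p/\tau$ emerges directly from the combination of the pointwise bound and the Poincar\'e step. The only mildly delicate point is confirming that the step-size condition $0<\tau|s(x)|\le1$ holds pointwise so the identity can be inverted without sign issues; once that is in place the result is a routine assembly.
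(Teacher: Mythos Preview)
Your proposal is correct and follows essentially the same route as the paper: derive the pointwise bound \eqref{convergence:one-level} from the rewritten iteration formula using $|s(x)|\ge1$ and $0<\tau|s(x)|\le1$, pass to the $L^2$-norm, apply Poincar\'e, and then invoke \thmref{thm:conv:one-level:nn} at both indices $n$ and $n+1$ to pick up the two copies of $\epsilon/(1-C_1)$. The paper presents exactly this chain in the paragraph immediately preceding the theorem statement.
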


\subsection{Two-level algorithm}\label{subsec:two-level}
In this subsection, in order to improve the convergence of the one-level algorithm
we propose a two-level algorithm by including a coarse
neural network solution $W_0(x;\theta^{(n+1)})$, that approximates the following coarse problem solution
$w(x)$ in a coarse subspace $V_0$ of $H_0^1(\Omega)$,
\begin{equation}\label{parallel:coarse:correction}
\begin{split}
-\triangle w &=f+\triangle U^{(n)}\; \text{ in } \Omega,\\
w&=0 \; \text{ on } \partial \Omega.
\end{split}
\end{equation}
Inclusion of such a coarse component in the two-level algorithm makes the iteration convergence
robust to the number of subdomains in the partition and such a property is important in
achieving scalable algorithms.
In the two-level algorithm the same local problems as in the one-level case are solved in each subdomain.
At each outer iteration, we obtain the resulting partitioned neural network solution as
\begin{equation}\label{parallel:two-level:sol}
\widehat{U}^{(n+1)}(x)=\frac{1}{|s(x)|} \left( W_0(x;\theta_0^{(n+1)}) + \sum_{i \in s(x)} U_i(x;\theta_i^{(n+1)})\right),
\end{equation}
that consists of the coarse neural network solution and the local neural network solutions
for the given iterate $U^{(n)}$.
The iterates $U^{(n+1)}$ in the two-level iteration formula are then undated as
\begin{equation}\label{parallel:two-level}
U^{(n+1)}=(1-N\tau) U^{(n)} + \tau \left( W_0(x;\theta_0^{(n+1)})+\sum_{i=1}^N U_i^{(n+1)}(x) \right),
\end{equation}
where $U_i^{(n+1)}(x)$ are defined as in \eqref{def:Ui}
and the above iteration formula is identical to
\begin{equation}\label{update:Un:two-level}
U^{(n+1)}(x)=(1-\tau |s(x)| ) U^{(n)}(x) + \tau |s(x)| \widehat{U}^{(n+1)}(x).
\end{equation}

The two-level algorithm is then listed as follows:

{\bf Algorithm 2: Two-level method} (input: $U^{(0)}$, output: $\widehat{U}^{(n+1)}$)

{\bf Step 0}: Let $U^{(0)}(x)$ be given and $n=0$.

{\bf Step 1-1}: Find $\theta_i^{(n+1)}$ in $U_i(x;\theta_i^{(n+1)})$ for each local problem,
\begin{equation*}
\begin{split}
-\triangle u&=f \;\text{ in } \Omega_i, \\
u&=U^{(n)}\; \text{ on } \partial \Omega_i.
\end{split}
\end{equation*}

{\bf Step 1-2}: Find $\theta_0^{(n+1)}$ in $W_0(x;\theta_0^{(n+1)})$ for the coarse problem,
\begin{equation*}\label{pb:coarse}
\begin{split}
-\triangle w&=f + \triangle U^{(n)} \; \text{ in } \Omega,\\
w&=0 \; \text{ on } \partial \Omega.
\end{split}
\end{equation*}

{\bf Step 2}: Update $U^{(n+1)}(x)$ at each data set $X_{\partial \Omega_i}$ and $\triangle U^{(n+1)}(x)$
at each data set $X_{\Omega}$, by using the iteration formula in \eqref{update:Un:two-level}.

{\bf Step 3}: Go to {\bf Step 1-1} with $n=n+1$ or set the output as $\widehat{U}^{(n+1)}$
if the stopping condition is met.

\vspace{0.3cm}
We note that the problems in Step 1-1 and Step 1-2 can be solved in parallel for
the given current iterate $U^{(n)}$.
When updating the values in Step 2, the data communication
between local neural networks and the coarse neural network is needed
in addition to the data communication between neighboring local neural networks.

For the convergence analysis of the iterates $U^{(n)}$ in the two-level method, we can follow
the proof similarly as in the one-level case.
For the iteration formula in \eqref{parallel:two-level}, we consider the corresponding iteration formula
with the Hilbert space solutions,
\begin{equation}\label{parallel:two-level:hilbert}
u^{(n+1)}=(1-N\tau) u^{(n)} + \tau ( w_0^{(n+1)}(x)+\sum_{i=1}^N u_i^{(n+1)}(x)),
\end{equation}
where $u_i^{(n+1)}$ are defined as the same as in the previous one-level algorithm
and $w_0^{(n+1)}(x)$ is the Hilbert space solution in $V_0$ for the coarse problem,
\begin{equation*}\label{parallel:coarse:correction}
\begin{split}
-\triangle w &=f+\triangle u^{(n)}\; \text{ in } \Omega,\\
w&=0 \; \text{ on } \partial \Omega.
\end{split}
\end{equation*}
One can show the convergence of $u^{(n)}$ to $u$
in the Hilbert space $H^1(\Omega)$ with the following additional assumption on
the coarse subspace $V_0$ of $V(=H_0^1(\Omega))$:
\begin{assumption}\label{assume4}
For any $v$ in $V$, there exists $v_0$ in $V_0$ such that
$$\| v - v_0 \|_{0} \le C H | v-v_0 |_{1},$$
where $H$ denotes the largest subdomain diameter and $\| \cdot \|_{0}$ denotes the $L^2$-norm.
\end{assumption}
We note that the solution $w_0^{(n+1)}$ for the coarse problem in \eqref{parallel:coarse:correction}
is found in the Hilbert subspace $V_0$ of $V$ and $w_0^{(n+1)}=P_0 (u-u^{(n)})$, where $P_0$ is
the projection onto $V_0$ such that
$$a(P_0 v,v_0)=a(v,v_0),\; \forall v_0 \in V_0.$$
The error equation in the two-level Schwarz algorithm is then obtained as
$$u-u^{(n+1)}=(I-\tau \sum_{i=0}^N P_i) (u-u^{(n)})$$
and it gives the error reduction factor estimate in the two-level case, following similarly as in Theorem~\ref{thm:conv:factor},
$$a(u-u^{(n+1)},u-u^{(n+1)}) \le R(\tau) a(u-u^{(n)},u-u^{(n)}),$$
where
$$R(\tau)=1-\frac{2}{2+C_0} \tau + 2 ( \| \mathcal{E} \|_2^2+1 ) \tau^2.$$
The constant $C_0$ in the Assumption~\ref{assume1} is extended by including the coarse Hilbert space $V_0$
and the constant $C_0$ for such a case follows the growth $N_c H/\delta$ using
the Assumption~\ref{assume4} on the coarse Hilbert space $V_0$.
The error reduction factor $C_2$ is then bounded by $R(\tau)$.
The minimum value of $R(\tau)$ follows the growth of $1-1/(N_c C_0)^2$.
From this we then conclude that the minimum value of $R(\tau)$ follows the growth of $1-(1/N_c^4)(\delta/H)^2$,
which is independent of the number of subdomains $N$, and it shows that
the convergence rate $C_2$ in the two-level case
is robust to the increase of the number of subdomains in the partition.

Similarly as in the one-level method, by assuming that the coarse and local neural network solutions
approximate the Hilbert space solutions with a sufficiently small error $\epsilon_0$ and $\epsilon/N$, respectively,
we can show that
\begin{equation}\label{two:conv:H1}
|u-U^{(n+1)}|_1 \le |u-u^{(n)} |_1 + \frac{1}{1-C_2} (\epsilon_0+\epsilon),
\end{equation}
where $C_2$ is the rate of convergence of the two-level algorithm in the Hilbert space, i.e.,
$$|u-u^{(n+1)}|_1 \le C_2 |u-u^{(n)}|_1.$$

For the neural network iterates $\widehat{U}^{(n+1)}$, we can prove the convergence in the $L^2$-norm as in the one-level case:
\begin{theorem}\label{cor:conv:two-level:nn:uhat}
Under the Assumptions~\ref{assume1}, \ref{assume2}, \ref{assume3}, and \ref{assume4}, the iterates $\widehat{U}^{(n)}$ in \eqref{parallel:two-level:sol}
satisfy
$$\|u-\widehat{U}^{(n+1)}\|_0 \le \frac{C_p}{\tau} \left(|u-u^{(n+1)}|_1 + |u-u^{(n)}|_1+\frac{2}{1-C_2} (\epsilon_0+\epsilon) \right),$$
where $\epsilon$ is the local neural network solution error in Assumption~\ref{assume3},
$\epsilon_0$ is the coarse neural network solution error,
and the constant $C_2$ is the error reduction factor for the iterates
$u^{(n)}$ in the two-level additive Schwarz algorithm.
\end{theorem}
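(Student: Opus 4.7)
The plan is to mirror the two-step strategy used for the one-level result in Theorem~\ref{cor:conv:one-level:nn:uhat}. First I would establish a two-level analogue of Theorem~\ref{thm:conv:one-level:nn}, namely
\[
|u-U^{(n+1)}|_1 \le |u-u^{(n+1)}|_1 + \frac{1}{1-C_2}(\epsilon_0+\epsilon),
\]
and then convert this $H^1$-seminorm bound for $U^{(n+1)}$ into an $L^2$-bound for $\widehat{U}^{(n+1)}$ by exploiting the pointwise identity obtained from rearranging \eqref{parallel:two-level:ptwise}.

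For the first step I would introduce an auxiliary iterate
\[
\widetilde{u}^{(n+1)} = (1-N\tau)U^{(n)} + \tau\Bigl(w_0^{(n+1)}(x) + \sum_{i=1}^N \widetilde{u}_i^{(n+1)}(x)\Bigr),
\]
i.e.\ the \emph{Hilbert-space} two-level iterate taken from the neural-network iterate $U^{(n)}$, with $\widetilde u_i^{(n+1)}$ and $w_0^{(n+1)}$ the exact local and coarse corrections for this starting value. Assumption~\ref{assume3} applied to each local subproblem, together with the assumed coarse approximation error $\epsilon_0$, yields
\[
|\widetilde{u}^{(n+1)} - U^{(n+1)}|_1 \le \tau\Bigl(\epsilon_0 + \sum_{i=1}^N \tfrac{\epsilon}{N}\Bigr) \le \epsilon_0 + \epsilon.
\]
Combining this with the two-level Hilbert-space contraction $|u^{(n+1)}-\widetilde{u}^{(n+1)}|_1 \le C_2 |u^{(n)}-U^{(n)}|_1$, which is precisely the error-reduction statement recalled just before the theorem, gives a one-step recursion
\[
|u^{(n+1)}-U^{(n+1)}|_1 \le C_2 |u^{(n)}-U^{(n)}|_1 + (\epsilon_0+\epsilon).
\]
Iterating down to the initialization $U^{(0)}=u^{(0)}$ and summing the geometric series in $C_2 \in (0,1)$ produces $|u^{(n+1)}-U^{(n+1)}|_1 \le \frac{1}{1-C_2}(\epsilon_0+\epsilon)$, and a triangle inequality delivers the desired $H^1$-seminorm bound on $|u-U^{(n+1)}|_1$.

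For the second step I would solve the identity \eqref{parallel:two-level:ptwise} for $\widehat{U}^{(n+1)}(x)$ and subtract from $u(x)$ to write
\[
u(x)-\widehat{U}^{(n+1)}(x) = \Bigl(1-\tfrac{1}{\tau|s(x)|}\Bigr)(u(x)-U^{(n)}(x)) + \tfrac{1}{\tau|s(x)|}(u(x)-U^{(n+1)}(x)).
\]
Since $|s(x)|\ge 1$ and $0<\tau|s(x)|\le 1$, a pointwise triangle inequality gives exactly the same bound as in the one-level case,
\[
|u(x)-\widehat{U}^{(n+1)}(x)| \le \tfrac{1}{\tau}|u(x)-U^{(n)}(x)| + \tfrac{1}{\tau}|u(x)-U^{(n+1)}(x)|.
\]
Squaring, integrating over $\Omega$, and invoking the Poincar\'e inequality $\|w\|_0 \le C_p |w|_1$ on $H_0^1(\Omega)$ for each of the two terms, followed by the first-step bound applied to both $U^{(n)}$ and $U^{(n+1)}$, yields the claimed inequality.

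The main obstacle is the first step: one has to be careful that the contraction constant $C_2$ used for $|u^{(n+1)}-\widetilde{u}^{(n+1)}|_1$ is truly the Hilbert-space two-level factor and not something larger, because the geometric summation absolutely requires $C_2<1$; this is guaranteed by choosing $\tau$ at the minimizer of the factor $R(\tau)$ recalled before the theorem, under Assumptions~\ref{assume1}, \ref{assume2}, \ref{assume4}. A minor bookkeeping point is that Assumption~\ref{assume3} is stated for local corrections, so the error $\epsilon_0$ for the coarse problem must be treated as an additional, analogous hypothesis (as the theorem's statement implicitly does). Once these are in place, the rest is a direct parallel of the one-level argument.
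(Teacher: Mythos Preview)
Your proposal is correct and follows essentially the same route the paper takes: the paper explicitly states the two-level analogue of Theorem~\ref{thm:conv:one-level:nn} (your Step~1) just before Theorem~\ref{cor:conv:two-level:nn:uhat}, and then says the $L^2$-bound for $\widehat{U}^{(n+1)}$ is obtained ``similarly as in the one-level case'' via the pointwise identity \eqref{parallel:two-level:ptwise} and Poincar\'e (your Step~2). Your handling of the auxiliary iterate $\widetilde{u}^{(n+1)}$, the recursion in $C_2$, and the extra coarse error term $\epsilon_0$ matches the paper's sketch exactly.
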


In our numerical results, we observed that the coarse problem in the above two-level method, Algorithm 2, does not help to speed up the iteration convergence, since the right hand side term $f+\triangle U^{(n)}$ in the coarse problem has
oscillatory and high contrast values near the overlapping region boundary,
see Fig.~\ref{fig:2d:smooth:coarse_forcing} in numerical results.
Such a coarse problem is hard to approximate with a good enough accuracy
using a coarse neural network function $U_0(x;\theta_0)$.

We can remove such a nonstandard coarse residual value problem in $f+\triangle U^{(n)}$
by proposing the following form of a partitioned neural network solution,
\begin{equation}\label{pu:two}
\widehat{U}(x;\theta)=\psi_0(x) U_0(x;\theta_0)+\sum_{i=1}^N (1-\psi_0(x))\phi_i(x)  U_i(x;\theta_i),
\end{equation}
where $0<\psi_0(x)<1$ and $\{\phi_i(x)\}_{i=1}^N$ form a partition of unity with each $\phi_i(x)$
supported in each subdomain $\Omega_i$, and $\theta=(\theta_0,\theta_1,\cdots,\theta_N)$.
Letting $\psi_i(x)=(1-\psi_0(x))\phi_i(x)$ for $i=1,\cdots,N$, we can also obtain $\{\psi_i(x) \}_{i=0}^N$ as
a partition of unity functions.
For the proposed partitioned neural network function in \eqref{pu:two},
we can apply the previous two-level iteration method to obtain the following algorithm:

{\bf Algorithm 3: Two-level method (partition of unity)} (input: $U^{(0)}$, output: $\widehat{U}^{(n+1)}$)

{\bf Step 0}: Let $U^{(0)}(x)$ be given and $n=0$.

{\bf Step 1}: For $i=0,\cdots,N$, find $\theta_i^{(n+1)}$ in $\psi_i(x)U_i(x;\theta_i^{(n+1)})$ for the following problem, note that $\Omega_0:=\Omega$,
\begin{equation*}
\begin{split}
-\triangle u&=f+\triangle (\sum_{j \ne i} \psi_j(x) U^{(n)}) \;\text{ in } \Omega_i, \\
u&=g-\sum_{j \ne i} \psi_j(x) U^{(n)} \; \text{ on } \partial \Omega_i \bigcap \partial \Omega.
\end{split}
\end{equation*}


{\bf Step 2}: Update $\triangle(\sum_{j \ne i} \psi_j(x) U^{(n+1)}(x))$ at each data set $X_{\Omega_i}$ and
$\sum_{j \ne i} \psi_j(x) U^{(n+1)}(x)$ at each data set $X_{\partial \Omega_i \bigcap \partial \Omega}$
using the iteration formula,
\begin{equation}\label{Un:pu}
U^{(n+1)}(x)=(1-\tau ) U^{(n)}(x) + \tau \widehat{U}^{(n+1)}(x),
\end{equation}
where
\begin{equation}\label{Uhat:pu}
\widehat{U}^{(n+1)}(x)=\sum_{i=0}^N \psi_i(x) U_i(x;\theta_i^{(n+1)}).
\end{equation}

{\bf Step 3}: Go to {\bf Step 1} with $n=n+1$ or set the output as $\widehat{U}^{(n+1)}$
if the stopping condition is met.

\vspace{0.3cm}

For the above Algorithm 3, we can consider the corresponding local solutions $\psi_i(x)u_i^{(n+1)}$ in the local Hilbert spaces $H^1(\Omega_i)$ and the coarse solution $\psi_0(x) u_0^{(n+1)}$ in the coarse Hilbert space $V_0$ of $H^1(\Omega)$. Using them, the iterates $u^{(n+1)}$ in the Hilbert space $H^1(\Omega)$ are obtained as
$$u^{(n+1)}=(1-\tau)u^{(n)}+\tau \sum_{i=0}^N \psi_i(x) u_i^{(n+1)},\quad \forall n\ge0.$$
With a given initial $u^{(0)}$ such that $u^{(0)}=g$ on $\partial \Omega$,
we can obtain the following error equation
$$u-u^{(n+1)}=u-u^{(n)}-\tau \sum_{i=0}^N \psi_i(x)(u_i^{(n+1)}-u^{(n)}).$$
By noticing that $\psi_i(x)(u_i^{(n+1)}-u^{(n)})$ is the Hilbert space solution to the following problem,
\begin{equation*}
\begin{split}
-\triangle(\psi_i(u_i^{(n+1)}-u^{(n)}))&=f+\triangle u^{(n)} \quad \text{in } \Omega_i\\
\psi_i(u_i^{(n+1)}-u^{(n)})&=g-u^{(n)} \quad \text{on } \partial \Omega_i \bigcap \partial \Omega,\\
\psi_i(u_i^{(n+1)}-u^{(n)})&=0 \quad \text{on } \partial \Omega_i \bigcap \Omega,
\end{split}
\end{equation*}
and using that $g-u^{(n)}=0$ on $\partial \Omega$, we can obtain $\psi_i(u_i^{(n+1)}-u^{(n)})=P_i(u-u^{(n)})$
and thus
$$u-u^{(n+1)}=u-u^{(n)}-\tau \sum_{i=0}^N P_i(u-u^{(n)})=(I-\tau \sum_{i=0}^N P_i) (u-u^{(n)}).$$
From the above error equation, for the iterates $U^{(n)}$ in Algorithm 3 we can obtain the same convergence result as in \eqref{two:conv:H1}
and for the iterates $\widehat{U}^{(n)}$ defined in \eqref{Uhat:pu} we can obtain a similar result by using the following identity:
$$u-\widehat{U}^{(n+1)}=\frac{1}{\tau} (u-U^{(n+1)})-(\frac{1}{\tau}-1) (u-U^{(n)}).$$
Differently from the previous Algorithms 1 and 2, we can also obtain
a stronger error estimate in the $H^1$-norm
for the iterates $\widehat{U}^{(n+1)}$ of the form in \eqref{Uhat:pu}.
In Algorithm 3, the right hand side term in the coarse problem, $f+\triangle(\sum_{i=1}^N \psi_j(x)U^{(n)})$,
presents smooth values over the domain $\Omega$ differently from the previous Algorithm 2, see Fig.~\ref{fig:2d:smooth:coarse_forcing} in numerical results.
Such a coarse problem can be well approximated by the coarse neural network with a small error $\epsilon_0$
and it thus helps to speed up the iteration convergence as increasing the number of subdomains, consistent
with our convergence result, see Theorem~\ref{cor:conv:two-level:nn:uhat} and the estimate in \eqref{two:conv:H1}. We note that a similar form of neural network approximation was also considered
in \cite{FBpinn,dolean2023multilevel}, where one-level and multi-level neural network functions are proposed
by using partition of unity functions and they are shown to be effective for modeling complex
and multi-frequency solutions.

\section{Numerical results}\label{sec:numerics}
In this section, we present numerical experiments of the proposed methods
for one-dimensional and two-dimensional model elliptic problems in an open interval $(a,b)$ and a unit rectangular domain, respectively.

In the one-dimensional examples, the domain is partitioned into $N$ uniform overlapping subintervals
and in the two-dimensional examples, into $N\times N$ uniform overlapping subrectangles.
These subintervals and subrectangles form subdomains in the partition.
In the partition, we set the overlapping width $\delta$ as one third of the subdomain diameter $H$, i.e., $H/\delta$ as $1/3$.
We note that in \cite{sineAct}
it was studied that the sine function preserves the derivative information
better than other commonly used activation functions.
In our numerical experiments, we thus employ fully connected neural network functions with the sine activation function to approximate the PDE solutions.

When training the parameters, we use the Adam optimizer~\cite{adam} with the learning rate as 0.001, and with the maximum number of epochs
as 10,000 for training the parameters in each local and coarse problems.
We set the maximum number of the outer iterations as 40 in the iterative methods and the initial value $U^{(0)}\equiv0$, unless otherwise mentioned.
In our numerical experiment, we also set $w_I=1$ and $W_B=500$ for the corresponding loss functions in each local and coarse problem, see \eqref{loss:two-terms}.

Our computation was performed on an Intel(R) Xeon(R) Gold 6248R CPU @ 3.00GHz and Quadro RTX 5000 and using the JAX library in Python. We used the jax.vmap to parallelize the local loss function computation in our Algorithms 1-3. In the Algorithms 2 and 3, at each outer iteration, we solved the local problems and the coarse problem sequentially. By solving them in fully parallel, we can also make the computation time in the Algorithms 2 and 3 faster. We also used the jax.jit to compile the local loss function calculation and data communication part. By compiling these operations, that are repeatedly used, we were able to reduce the overall training time further in our computation.

\subsection{One-dimensional examples}
For the one-dimensional case, we will consider two examples to show the performance of the proposed one-level and two-level iterative algorithms.
As a first example, we consider a smooth example and later we consider a multiscale example.

{\bf Smooth example.} We consider a model problem,
\begin{equation*}
\begin{split}
-u''(x)&=f(x),\quad \forall x \in (-1,\;1),\\
u(-1)&=u(1)=0,\\
\end{split}
\end{equation*}
where the right hand side function $f(x)$ is chosen to give the exact solution
\begin{equation}\label{model:1d:smooth}
u^*(x)=\sin(2 \pi x).
\end{equation}

We solve the above model problem using Algorithms 1-3, where local and coarse problems are solved using local neural networks and a coarse neural network at each outer iteration.
In the following Table~\ref{table:1d:smooth:info}, we list the hyper parameter settings for the neural network and training data set used in our computation for the above model problem in (\ref{model:1d:smooth}).
As increasing the number of subdomains $N$, we set the local network size smaller so that
the total number of parameters is similar for all the subdomain partition cases.
\begin{table}[ht!]
	\caption{
Hyper parameter settings for the computation results in Tables~\ref{table:1d:smooth:error} and \ref{table:1d:smooth:error:trained_U0}: The number of layers, neurons, parameters, and the number of interior and boundary training data points for the local and coarse networks (last row).}\label{table:1d:smooth:info}
	{\normalsize \renewcommand{\arraystretch}{1.0}
		\begin{center}
			\vskip-.3truecm
			\begin{tabular}{c|ccccc}
				
				\hline\hline
				& \multicolumn{5}{c}{Local problem} \\
				\hline\hline
				No. of subdomains & Layers & Neurons & Parameters  & Interior data & Boundary data \\
				\Xhline{3\arrayrulewidth}
				$5$               & 4 & 7 & 190 & 198 & 2\\
				$10$              & 4 & 5 & 106 & 98 & 2\\
				$20$              & 4 & 3 & 46 & 48  & 2\\
				\Xhline{0.8\arrayrulewidth}
				Coarse problem    & 4 & 5 & 106 & 98  & 2 \\
				\Xhline{3\arrayrulewidth}
			\end{tabular}
		\end{center}
	}
	\vskip-.2truecm
\end{table}

In Table~\ref{table:1d:smooth:error}, the mean values of relative $L^2$-errors to the exact solution are reported for the neural network solutions obtained from Algorithms 1-3 using five different seeds and with the zero initial $U^{(0)}$.
The error results show that the convergence in the neural network solution gets
slower in Algorithm 1 as increasing the number of subdomains $N$, that is consistent with
our convergence analysis in the previous section.
On the other hand, the convergence of the neural network solution in Algorithm 2
shows a similar behavior as in Algorithm 1, with  even larger errors than
those in Algorithm 1, contrary to our theory developed for the two-level algorithm.
In Algorithm 3, i.e., the two-level algorithm for the partition of unity neural networks,
the error results clearly show that its convergence rate is robust to the number of subdomains,
consistent with our theory for the two-level algorithm.

In Fig.~\ref{fig:1d:smooth}, the error decay history in Algorithms 1-3 is also presented
and compared over the outer iterations for 5, 10, and 20 subdomain partitions.
For all the subdomain partition cases, Algorithm 3 shows similar error decay plots,
while Algorithms 1 and 2 show slower error decay rates as increasing the number of subdomains in
the partition. Algorithm 2 shows even slower decay rates than those in Algorithm 1.

\begin{table}[ht!]
	\caption{Smooth example in (\ref{model:1d:smooth}):
The mean values of relative $L^2$-errors in Algorithms 1-3
trained with five different seeds, as increasing the number of subdomains $N$.}\label{table:1d:smooth:error}
	{\normalsize \renewcommand{\arraystretch}{1.0}
		\begin{center}
			\vskip-.3truecm
			\begin{tabular}{cccc}

				\hline\hline
				No. of subdomains & Algorithm 1  & Algorithm 2 & Algorithm 3 \\
				\Xhline{3\arrayrulewidth}
				5  & 0.0010 & 0.0013 & 0.0021\\
				10 & 0.0018 & 0.0054 & 0.0022\\
				20 & 0.1172 & 0.2364 & 0.0046\\
				\Xhline{3\arrayrulewidth}
			\end{tabular}
		\end{center}
	}
	\vskip-.2truecm
\end{table}

\begin{figure}[ht!]
	\centering
	\begin{subfigure}{0.32\textwidth}
		\includegraphics[width=\textwidth]{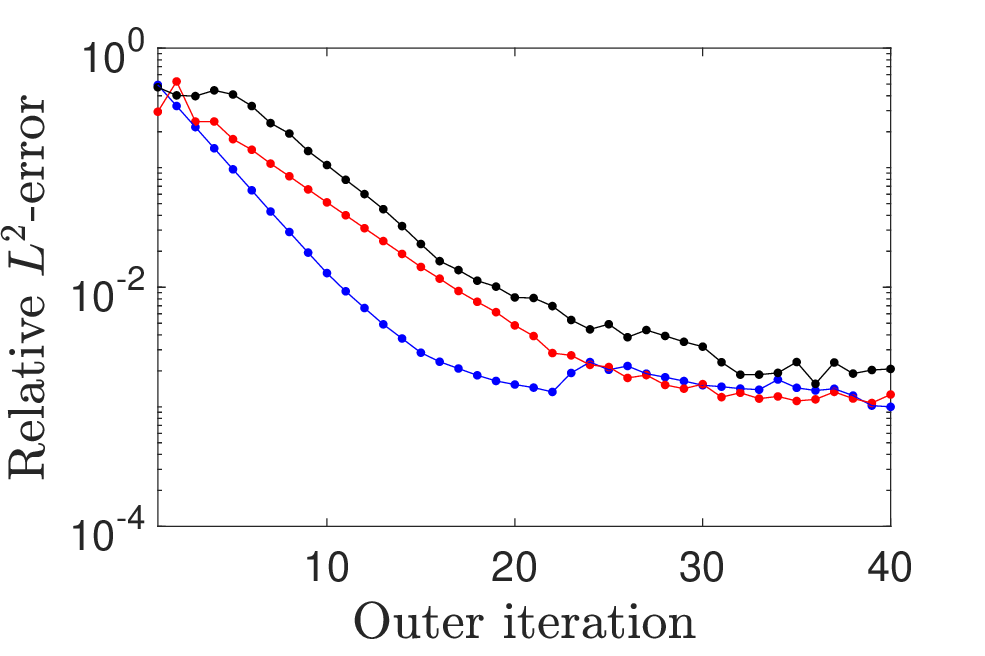}
		\caption{5 subdomains}
		\label{fig:1d:smooth:first}
	\end{subfigure}
	\hfill
	\begin{subfigure}{0.32\textwidth}
		\includegraphics[width=\textwidth]{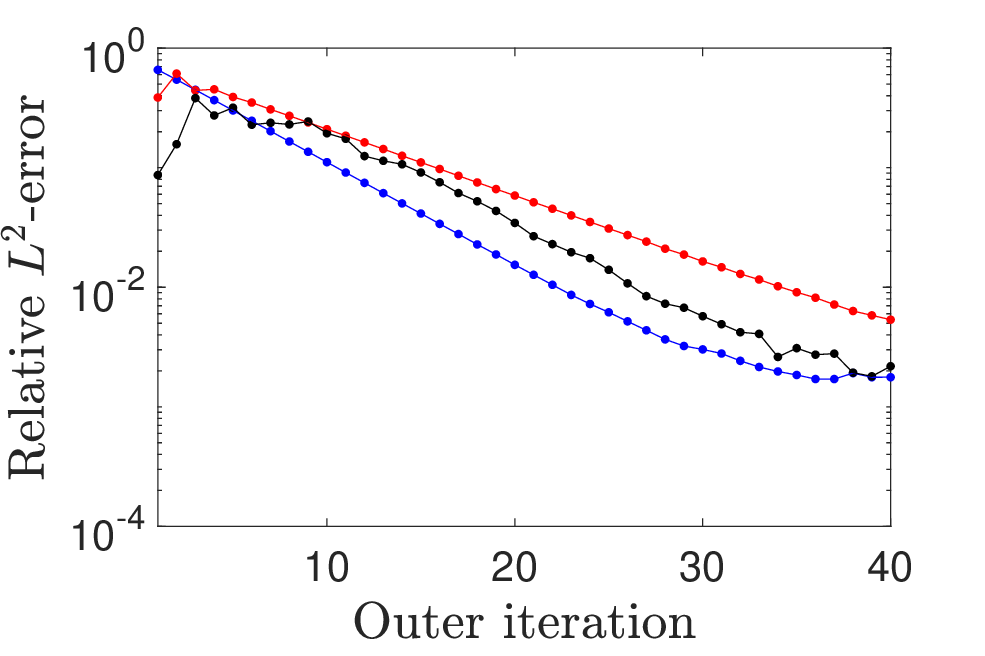}
		\caption{10 subdomains}
		\label{fig:1d:smooth:second}
	\end{subfigure}
	\hfill
	\begin{subfigure}{0.32\textwidth}
		\includegraphics[width=\textwidth]{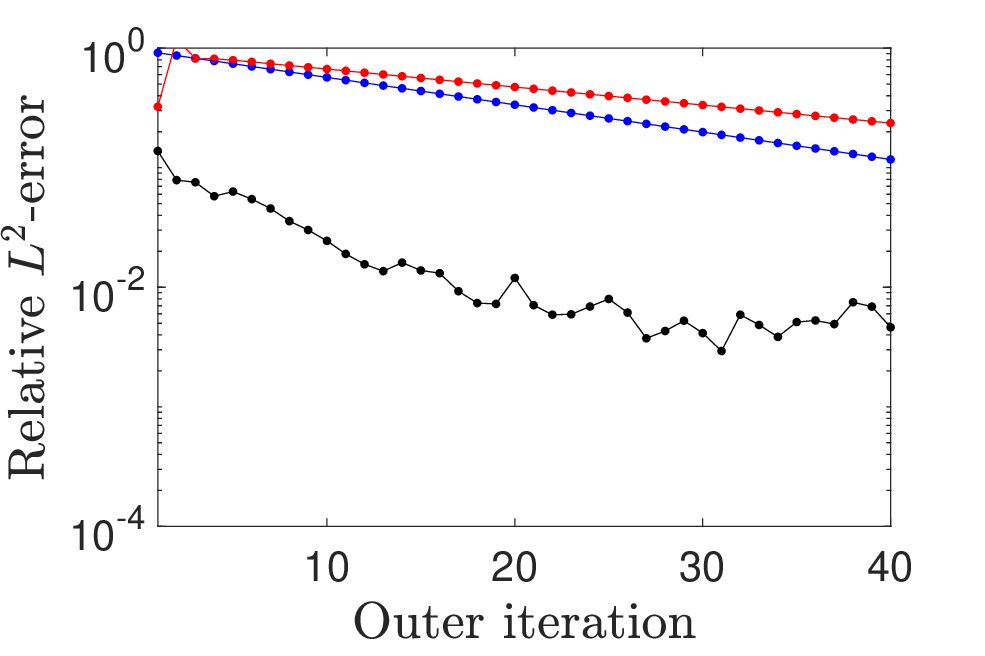}
		\caption{20 subdomains}
		\label{fig:1d:smooth:third}
	\end{subfigure}
	
	\caption{Smooth example in (\ref{model:1d:smooth}): Error decay history in Algorithm 1 (blue),
Algorithm 2 (red), and Algorithm 3 (black).
}
	\label{fig:1d:smooth}
\end{figure}

The training performance in our iteration methods can depend on the choice of initial value $U^{(0)}$.
For the same smooth example, we employ an initial value $U^{(0)}$ as a trained solution with
a relatively small neural network, training data, and training epochs.
For the network used for initial value setting, we form the fully connected neural network with 106 parameters and 100 training data points selected randomly from the interval, which include 98 interior data points from the interval and the two end points.
We also train the initial network
using the Adam optimizer with the learning rate of 0.001 and with the 10,000 training epochs.
For Algorithms 1-3, we use the trained solution to set the initial value $U^{(0)}$ and report
the relative $L^2$-errors for the neural network approximation, obtained from  five different seeds.
The error results are listed in Table~\ref{table:1d:smooth:error:trained_U0}.
With a good initial value, closer to the solution, we observe that
the errors are smaller than those obtained in Table~\ref{table:1d:smooth:error}.
Similarly as before, in Algorithms 1 and 2, the solution convergence gets slower
as more subdomains in the partition.
In Algorithm 3, we obtained error results that are more robust to the number of subdomains
than those in Algorithms 1 and 2.
We note that in Algorithm 3 we observed that
some trained solutions starting from the zero initial $U^{(0)}$
fall into a local minimum at early outer iterations and
the use of a trained initial $U^{(0)}$ can help to avoid such wrong trained solutions.
In Algorithm 1, for all choices of the initial value $U^{(0)}$, we obtained
successful training results without such a local minimum problem.

In Fig.~\ref{fig:1d:smooth:trained_U0}, we plot the error decay history over the outer iterations
in Algorithms 1-3 with a trained initial $U^{(0)}$.
As increasing the number of subdomains in the partition, in Algorithms 1 and 2, the error decay rates
are getting slower while in Algorithm 3 the error decay rates are robust to the number of subdomains.
Starting with a good initial $U^{(0)}$, we also observe that Algorithm 3 performs
more efficiently than Algorithms 1 and 2 for all the subdomain partition cases, compared
to the error decay plots in Fig.~\ref{fig:1d:smooth} with the zero initial $U^{(0)}$.

\begin{table}[ht!]
	\caption{Smooth example in (\ref{model:1d:smooth}):
The mean values of relative $L^2$-errors in Algorithms 1-3
trained with five different seeds, as increasing the number of subdomains $N$ and with a trained initial $U^{(0)}$.}\label{table:1d:smooth:error:trained_U0}
	{\normalsize \renewcommand{\arraystretch}{1.0}
		\begin{center}
			\vskip-.3truecm
			\begin{tabular}{cccc}
				
				\hline\hline
				No. of subdomains & Algorithm 1  & Algorithm 2 & Algorithm 3 \\
				\Xhline{3\arrayrulewidth}
				Initial $U^{(0)}$ & 0.0282 & 0.0282 & 0.0282 \\
				\Xhline{0.8\arrayrulewidth}
				5  & 0.0022 & 0.0129 & 0.0012 \\
				10 & 0.0142 & 0.0315 & 0.0030 \\
				20 & 0.0258 & 0.1840 & 0.0037 \\
				\Xhline{3\arrayrulewidth}
			\end{tabular}
		\end{center}
	}
	\vskip-.2truecm
\end{table}

\begin{figure}[ht!]
	\centering
	\begin{subfigure}{0.32\textwidth}
		\includegraphics[width=\textwidth]{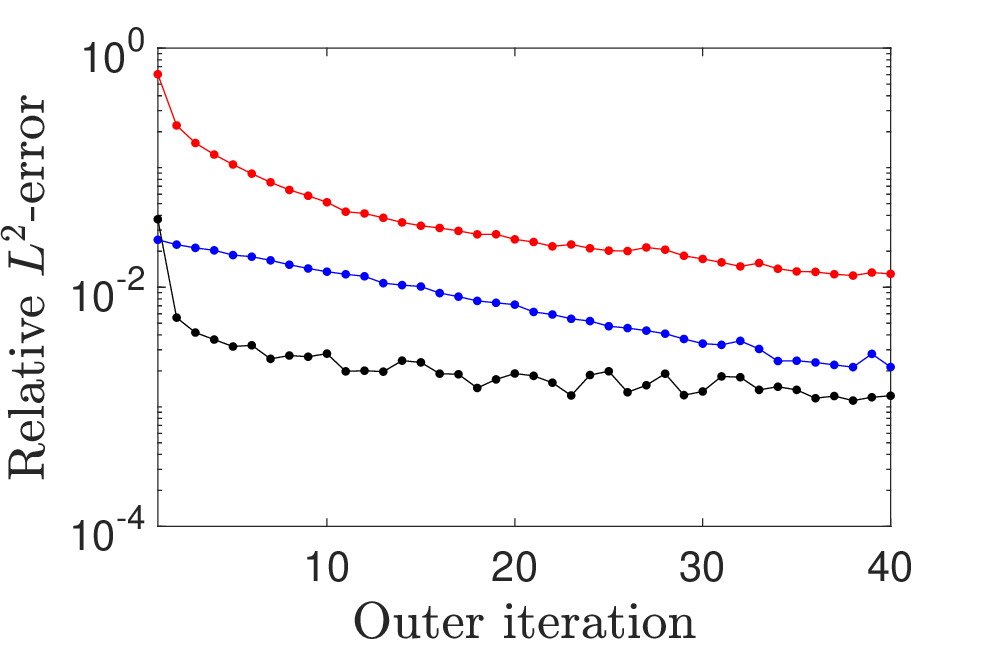}
		\caption{5 subdomains}
		\label{fig:1d:smooth:trained_U0:first}
	\end{subfigure}
	\hfill
	\begin{subfigure}{0.32\textwidth}
		\includegraphics[width=\textwidth]{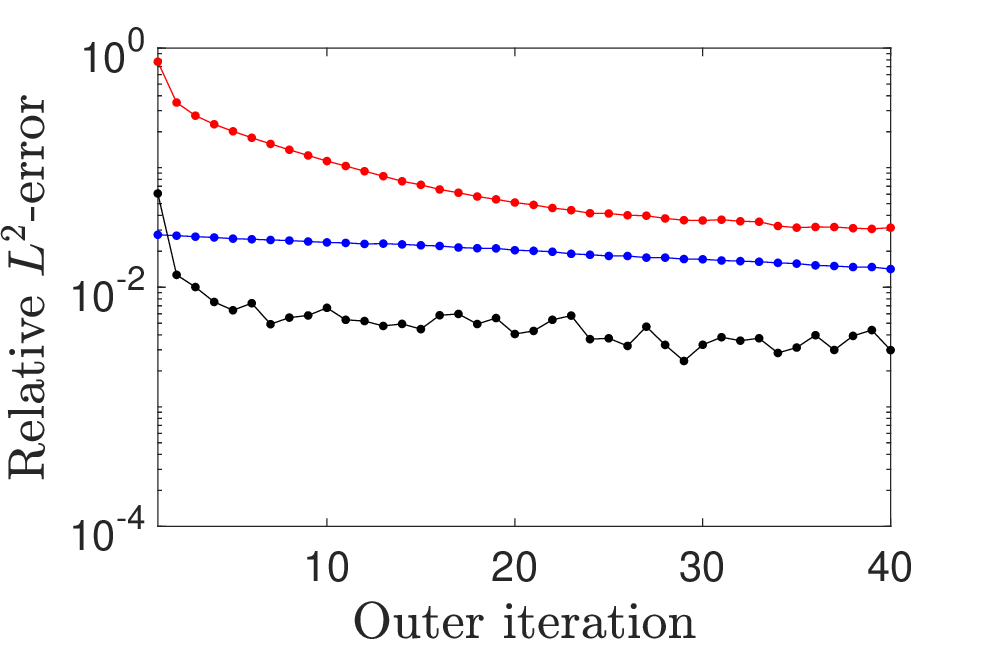}
		\caption{10 subdomains}
		\label{fig:1d:smooth:trained_U0:second}
	\end{subfigure}
	\hfill
	\begin{subfigure}{0.32\textwidth}
		\includegraphics[width=\textwidth]{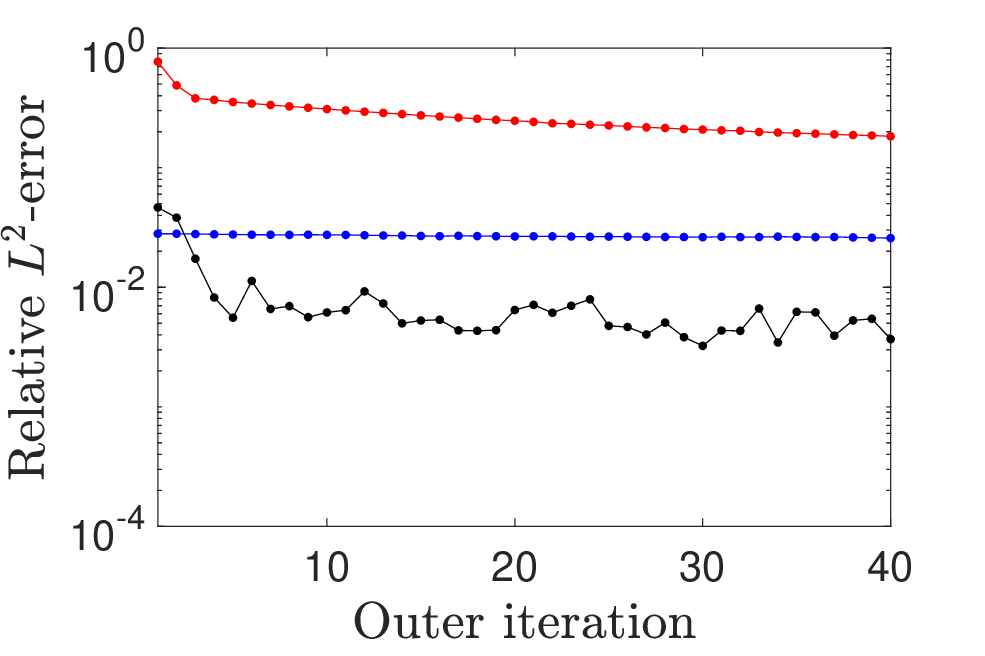}
		\caption{20 subdomains}
		\label{fig:1d:smooth:trained_U0:third}
	\end{subfigure}
	
	\caption{Smooth example in (\ref{model:1d:smooth}): Error decay history with a trained $U^{(0)}$ in Algorithm 1 (blue), Algorithm 2 (red), and Algorithm 3 (black).
 }
	\label{fig:1d:smooth:trained_U0}
\end{figure}

{\bf Multiscale example.}
We now perform the proposed methods for a more challenging multiscale model problem with its exact solution given as
\begin{equation}\label{model:1d:multiscale}
	u^*(x)=2\sin(4\pi x)+\sin(8\pi x)+\frac{1}{2}\sin(16\pi x),\quad x \in (-1,\; 1).
\end{equation}
The plots of the solution and the corresponding forcing term $f(x)$ in the differential equation, $-u_{xx}(x)=f(x)$,
are presented in Fig.~\ref{fig:1d:multiscale:exact}.
We note that neural network approximation to such a multiscale feature solution is known to be
difficult task and several successful approaches have been also studied in earlier works~\cite{cai2020phase,liu2020multi} by using
different approaches from ours.

\begin{figure}[ht!]
	\centering
	\includegraphics[width=0.4\textwidth]{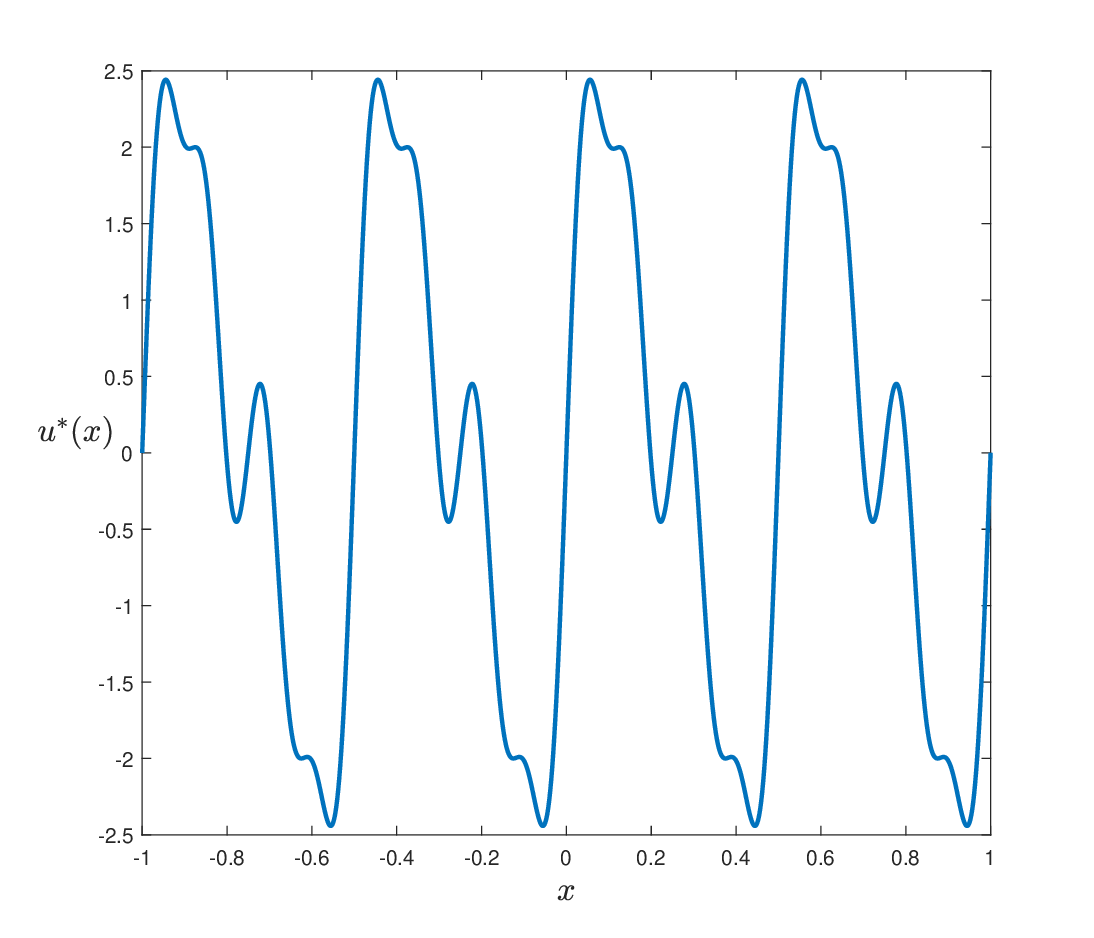}
	\includegraphics[width=0.4\textwidth]{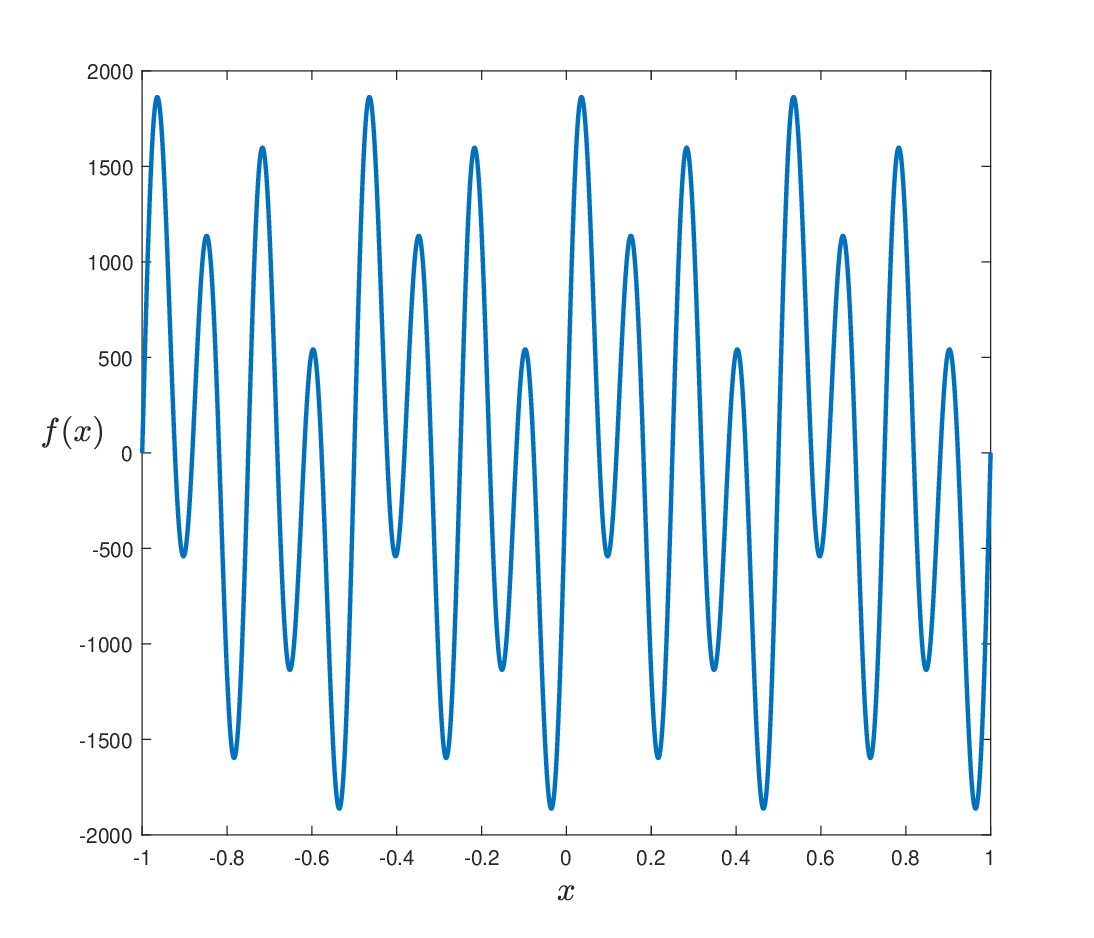}
	\vskip-.4truecm
	\caption{Multiscale example in~\eqref{model:1d:multiscale}: Exact solution $u^*(x)$ (left) and forcing term $f(x)$ (right)}
	\label{fig:1d:multiscale:exact}
\end{figure}

In Table~\ref{table:1d:multiscale:info}, we list the hyper parameter settings for the neural network and training data set used in our computation for the above multiscale model problem in~\eqref{model:1d:multiscale}.
Similarly as in the smooth example case, we employed smaller local neural networks as
increasing the number of subdomains.

\begin{table}[ht!]
\caption{Hyper parameter settings for the computation results in Tables~\ref{table:1d:multiscale:error}
and \ref{table:1d:multiscale:error:trained_U0}: The number of layers, neurons, parameters, and the number of interior and boundary training data points for the local and coarse networks (last row).}\label{table:1d:multiscale:info}
	{\normalsize \renewcommand{\arraystretch}{1.0}
		\begin{center}
			\vskip-.3truecm
			\begin{tabular}{c|ccccc}
				
				\hline\hline
				& \multicolumn{5}{c}{Local problem} \\
				\hline\hline
				No. of subdomains & Layers & Neurons & Parameters  & Interior data & Boundary data \\
				\Xhline{3\arrayrulewidth}
				$4$               & 2 & 18 & 397 & 398 & 2\\
				$8$               & 2 & 12 & 193 & 198 & 2\\
				$16$              & 2 & 8 & 97 & 98  & 2\\
				\Xhline{0.8\arrayrulewidth}
                Coarse problem     & 2 & 18 & 397 & 398 & 2 \\
				\Xhline{3\arrayrulewidth}
			\end{tabular}
		\end{center}
	}
	\vskip-.2truecm
\end{table}

In Table~\ref{table:1d:multiscale:error}, we report the error results obtained from Algorithms 1-3
for the multiscale example in \eqref{model:1d:multiscale}.
In this multiscale example, we needed more outer iterations than in the previous smooth example
to obtain accurate enough approximate solutions
and we thus performed 100 outer iterations in our computation.
As increasing the number of subdomains $N$, the convergence of the neural network approximation
is getting slower in Algorithms 1 and 2.
In Algorithm 3, we even observe smaller errors with the 8 and 16 subdomain cases than with the 4 subdomain case,
in contrast to the error results for the smooth example in Tables~\ref{table:1d:smooth:error} and \ref{table:1d:smooth:error:trained_U0}.
The two-level algorithm, i.e., Algorithm 3, for the partitioned neural networks built on the partition of unity functions, performs more effectively for the multiscale example than for the smooth example as
more subdomains are introduced in the partition.
In Fig.~\ref{fig:1d:multiscale}, the error decay history for the approximate solutions in Table~\ref{table:1d:multiscale:error}
is reported over the 100 outer iterations.
As increasing the number of subdomains, the error decay in Algorithms 1 and 2 is getting slower
while those in Algorithm 3 are observed to be robust.
In addition, for all the subdomain partition cases, Algorithm 3 is
observed to give faster convergent approximate solutions than those in Algorithms 1 and 2.



\begin{table}[ht!]
	\caption{Multiscale example in (\ref{model:1d:multiscale}): The mean values of relative $L^2$-errors in Algorithms 1-3 trained with five different seeds, as increasing the number of subdomains $N$.}\label{table:1d:multiscale:error}
	{\normalsize \renewcommand{\arraystretch}{1.0}
		\begin{center}
			\vskip-.3truecm
			\begin{tabular}{cccc}
				
				\hline\hline
				No. of subdomains & Algorithm 1  & Algorithm 2 & Algorithm 3 \\
				\Xhline{3\arrayrulewidth}
				4      & 0.0062 & 0.0136 & 0.0042 \\
				8      & 0.0088 & 0.0641 & 0.0028 \\
				16     & 0.0058 & 0.4526 & 0.0028 \\
				\Xhline{3\arrayrulewidth}
			\end{tabular}
		\end{center}
	}
	\vskip-.2truecm
\end{table}

\begin{figure}[ht!]
	\centering
	\begin{subfigure}{0.32\textwidth}
		\includegraphics[width=\textwidth]{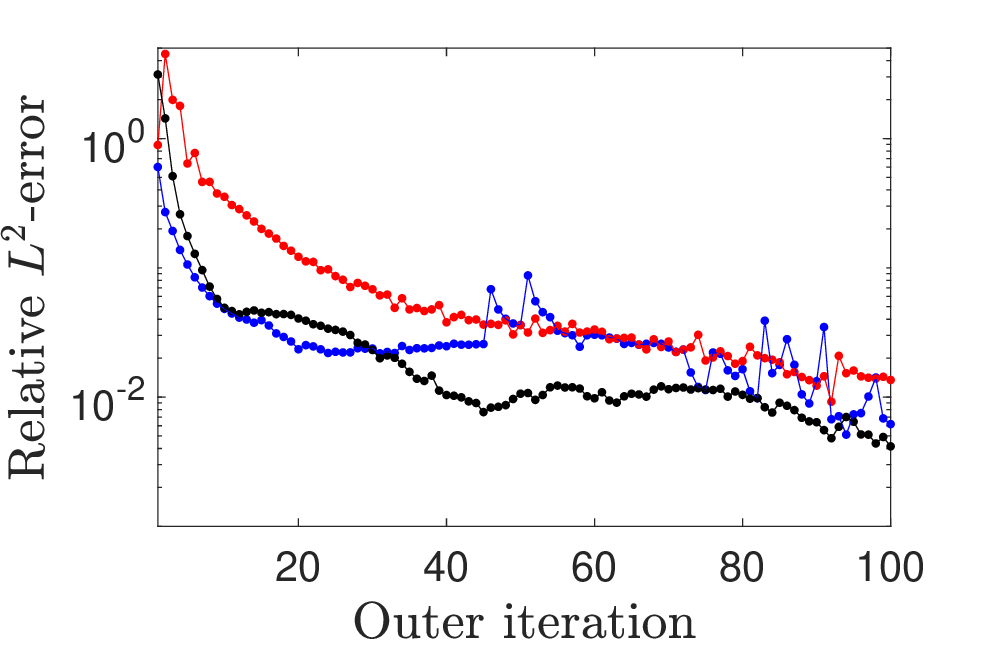}
		\caption{4 subdomains}
		\label{fig:1d:multiscale:first}
	\end{subfigure}
	\hfill
	\begin{subfigure}{0.32\textwidth}
		\includegraphics[width=\textwidth]{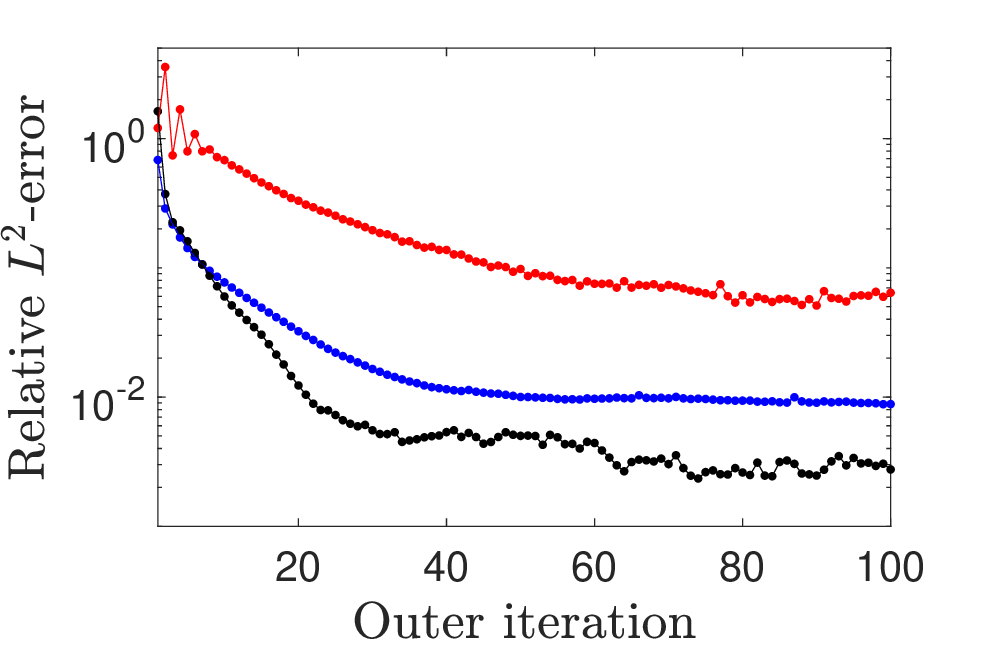}
		\caption{8 subdomains}
		\label{fig:1d:multiscale:second}
	\end{subfigure}
	\hfill
	\begin{subfigure}{0.32\textwidth}
		\includegraphics[width=\textwidth]{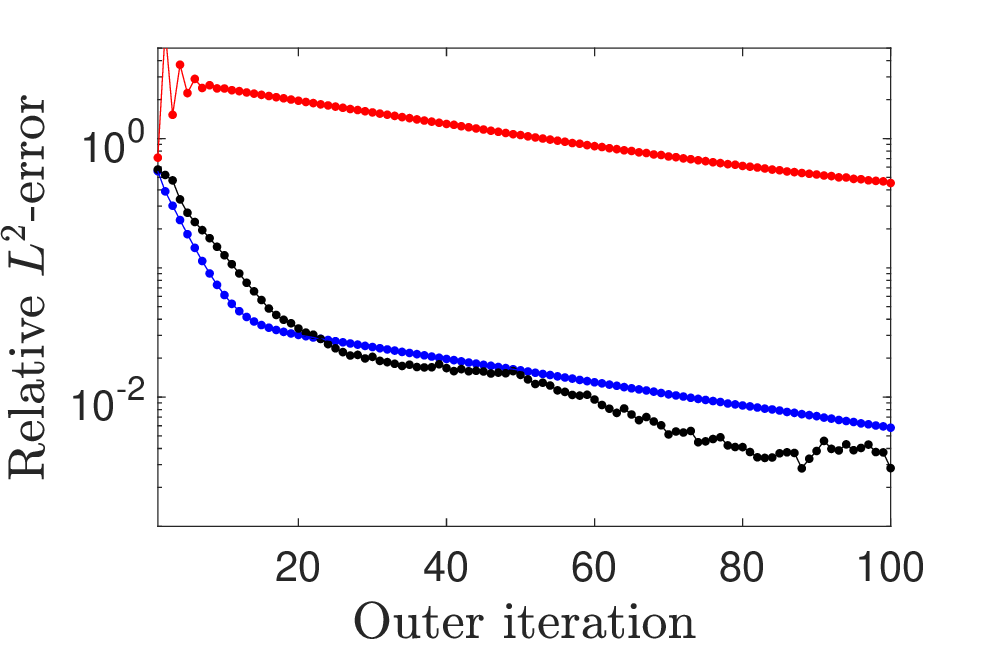}
		\caption{16 subdomains}
		\label{fig:1d:multiscale:third}
	\end{subfigure}
	
	\caption{Multiscale example in (\ref{model:1d:multiscale}): Error decay history in Algorithm 1 (blue),
		Algorithm 2 (red), and Algorithm 3 (black).}
	\label{fig:1d:multiscale}
\end{figure}


We now employ an initial value $U^{(0)}$ as a trained solution with
a relatively small neural network, training data, and training epochs.
For the initial value setting, we form a fully connected neural network with 397 parameters and 400 training data points selected randomly from the interval, which include 398 interior data points from the interval and the two end points. The learning rate and the training epochs for $U^{(0)}$ are set to the same as in the smooth example.
The error results are reported in Table~\ref{table:1d:multiscale:error:trained_U0} for the trained
$U^{(0)}$ case. In Algorithms 1 and 2, similar convergence results as in the previous Table~\ref{table:1d:multiscale:error} are obtained and in Algorithm 3, the neural network solution convergence is again robust
to the number of subdomains. For the obtained results in Table~\ref{table:1d:multiscale:error:trained_U0}, the error decay history up to 100 outer iterations
is presented in Fig.~\ref{fig:1d:multiscale:trained_U0}.
It shows scalable convergence for Algorithm 3, while the slower convergence in Algorithm 1 as the more
subdomains in the partition.

\begin{table}[ht!]
	\caption{Multiscale example in (\ref{model:1d:multiscale}): The mean values of relative $L^2$-errors in Algorithms 1-3 trained with five different seeds, as increasing the number of subdomains $N$ and with a trained initial $U^{(0)}$.}\label{table:1d:multiscale:error:trained_U0}
	{\normalsize \renewcommand{\arraystretch}{1.0}
		\begin{center}
			\vskip-.3truecm
			\begin{tabular}{cccc}
				
				\hline\hline
				No. of subdomains & Algorithm 1  & Algorithm 2 & Algorithm 3 \\
				\Xhline{3\arrayrulewidth}
				Initial $U^{(0)}$ & 0.0397 & 0.0397 & 0.0397\\
				\Xhline{0.8\arrayrulewidth}
				4                 & 0.0026 & 0.0137 & 0.0043 \\
				8                 & 0.0047 & 0.0941 & 0.0058 \\
				16                & 0.0177 & 0.0585 & 0.0041 \\
				\Xhline{3\arrayrulewidth}
			\end{tabular}
		\end{center}
	}
	\vskip-.2truecm
\end{table}

\begin{figure}[ht!]
	\centering
	\begin{subfigure}{0.32\textwidth}
		\includegraphics[width=\textwidth]{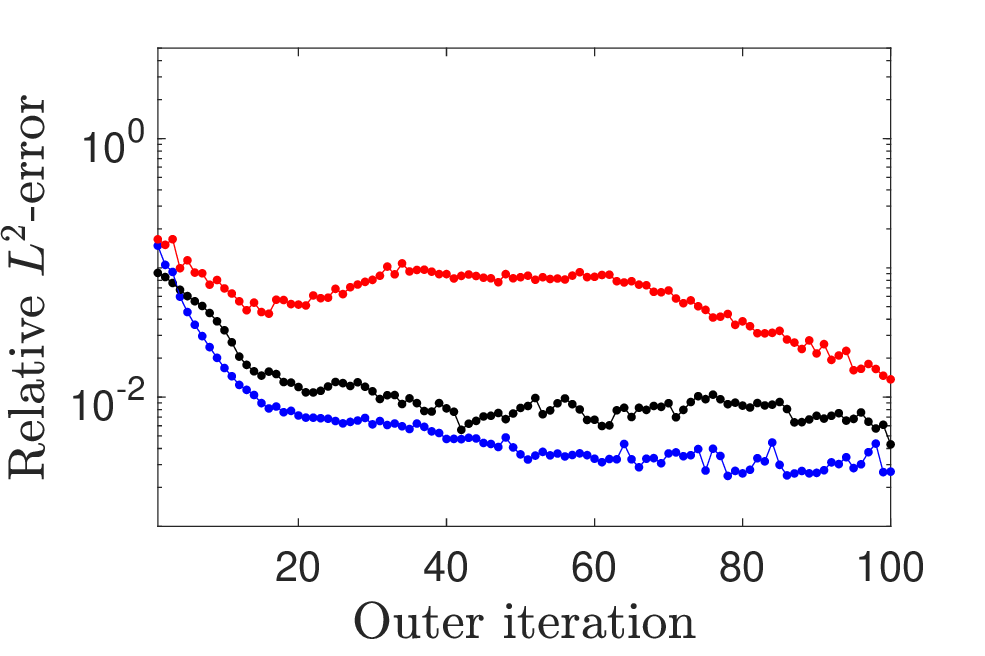}
		\caption{4 subdomains}
		\label{fig:1d:multiscale:trained_U0:first}
	\end{subfigure}
	\hfill
	\begin{subfigure}{0.32\textwidth}
		\includegraphics[width=\textwidth]{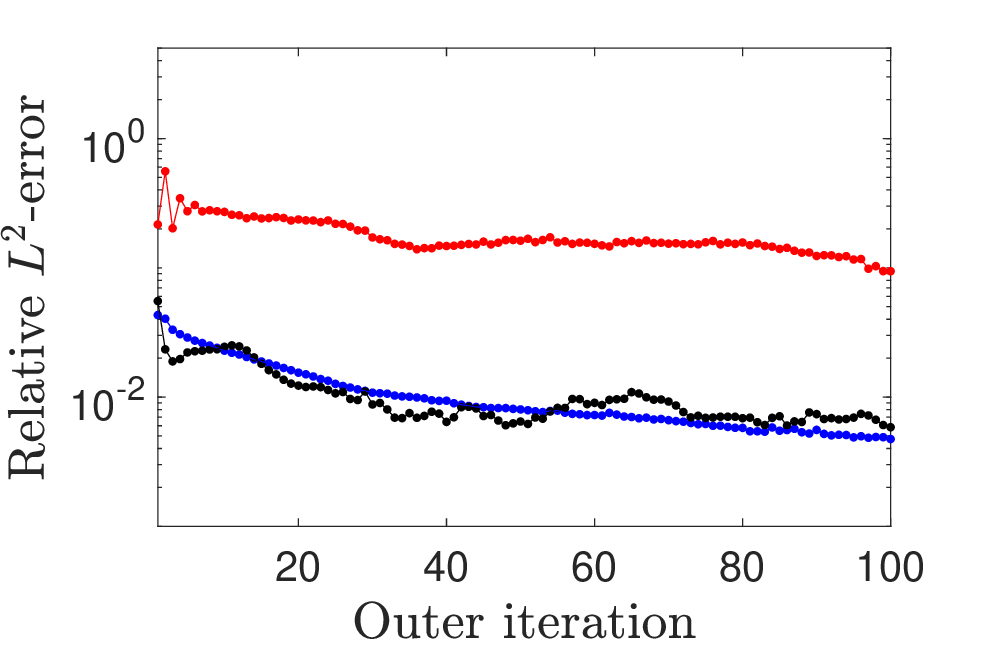}
		\caption{8 subdomains}
		\label{fig:1d:multiscale:trained_U0:second}
	\end{subfigure}
	\hfill
	\begin{subfigure}{0.32\textwidth}
		\includegraphics[width=\textwidth]{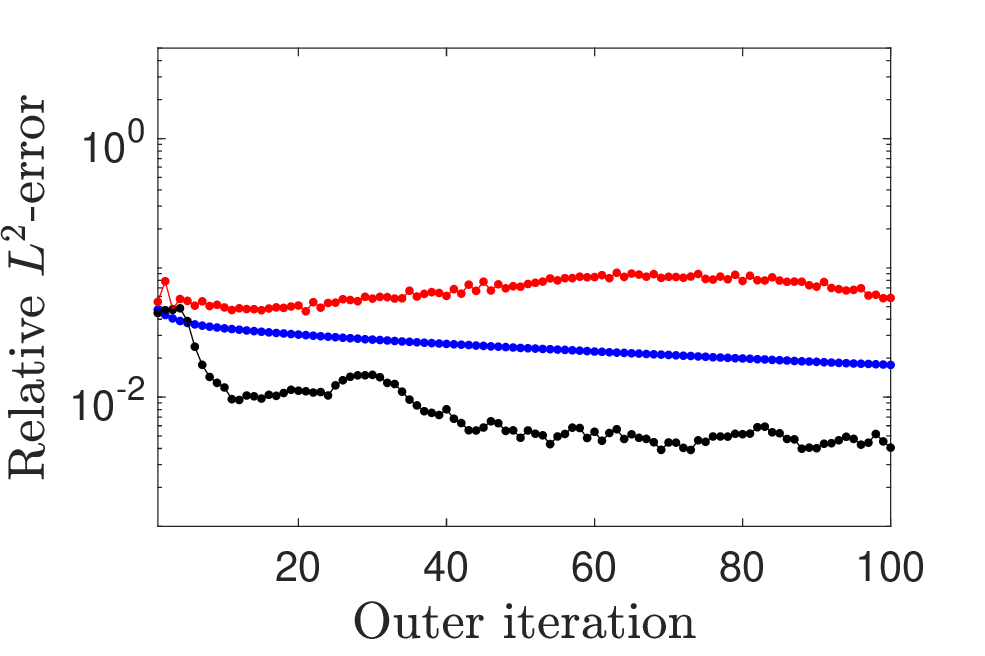}
		\caption{16 subdomains}
		\label{fig:1d:multiscale:trained_U0:third}
	\end{subfigure}
	
	\caption{Multiscale example in (\ref{model:1d:multiscale}): Error decay history with a trained $U^{(0)}$ in Algorithm 1 (blue),
		Algorithm 2 (red), and Algorithm 3 (black).}
	\label{fig:1d:multiscale:trained_U0}
\end{figure}


\subsection{Two-dimensional examples}
For the two-dimensional case, we will test our proposed methods
for an oscillatory example defined in a large problem domain, a smooth example, and a multiscale example.

{\bf{Oscillatory example in a large domain.}} When applied to approximate a model problem solution defined in a large problem domain, a single PINN approach often suffers from a longer parameter training time.
Such a long training time problem is caused by the need for more data and more network parameters as the problem domain size becomes larger. To resolve this issue, a domain scaling approach can be used by transforming the large problem domain into a smaller domain and solving the smaller domain problem by using neural network approximation.
However, such a domain scaling approach transforms the original solution into a much more oscillatory solution, causing difficulties in the neural network approximation for the smaller domain problem.
To show the effectiveness of the partitioned neural network approach for such a large domain problem,
we consider a Poisson problem in a large problem domain with the following oscillatory solution,
\begin{align}\label{model:2d:largedomain}
	u^*(x,y)=\text{cos}(\pi x)\text{cos}(\pi y),\quad (x,y)\in(0,100)\times(0,4).
\end{align}
The exact solution plot is presented in Fig.~\ref{fig:2d:large:exact}.
We can easily see that
the use of domain scaling approach, for example, scaling the domain into a unit square domain,
transforms the original model problem solution into a much more oscillatory solution
in the unit square domain,
that is still much harder to be resolved by the single neural network approximation.

\begin{figure}[ht!]
	\centering
	\includegraphics[width=0.8\textwidth]{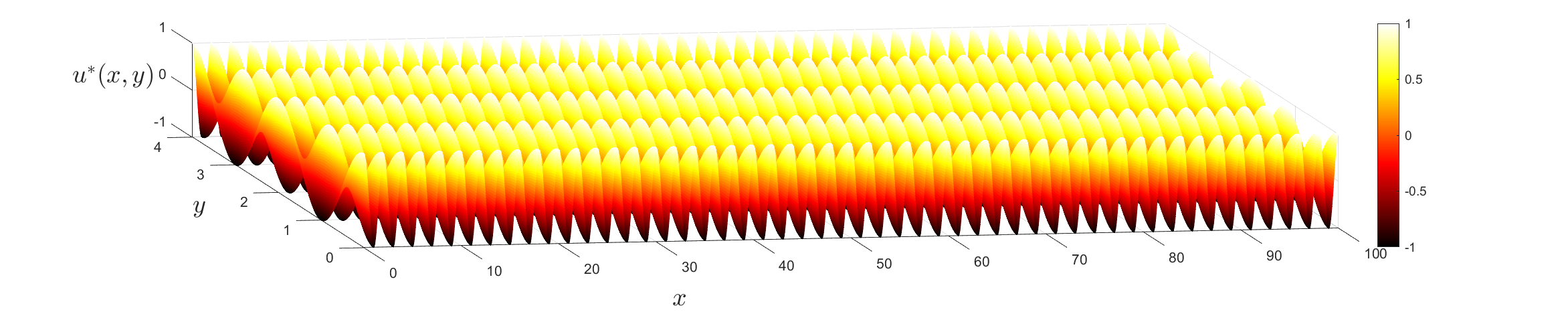} \\
	\includegraphics[width=0.8\textwidth]{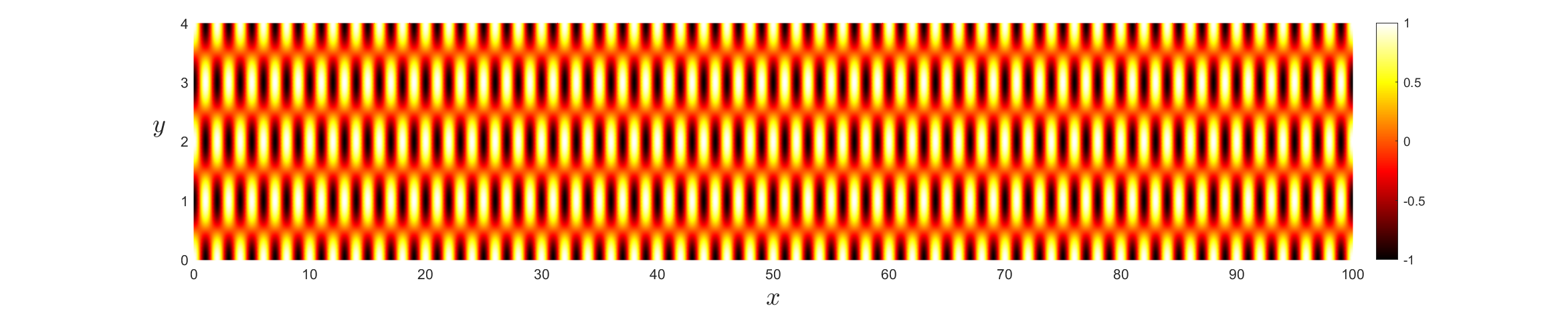}
	\vskip-.4truecm
	\caption{Oscillatory example in (\ref{model:2d:largedomain}): Surface plot (top) and contour plot (bottom) of the exact solution $u^*(x,y)$ in a large problem domain $(0,\,100)\times(0,\;4)$.}
	\label{fig:2d:large:exact}
\end{figure}

We will solve the above model problem in \eqref{model:2d:largedomain} by using single neural network approximation
and by using partitioned neural network approximation.
In our computation, we simply use Algorithm 1 for the partitioned neural network approximation
and partition the problem domain into $50\times4$ subdomains.
In Table~\ref{table:2d:large:info}, we list the number of parameters, and interior and boundary training data points, used in our computation.
In the single neural network case, we perform our computation by employing three kinds of networks,
i.e., increasing the number of parameters and the associated training data points.
We also note that the total number of parameters in the partitioned neural networks is comparable to the $40481$ parameter case in the single neural network approximation.
\begin{table}[ht!]
\caption{Hyper parameter settings for the computation results in Table~\ref{table:2d:large:error}: The number of layers, neurons, parameters, and the number of interior and boundary training data points for each local network (second row)
in the partitioned neural networks and for the single network (last three rows).}\label{table:2d:large:info}
	{\normalsize \renewcommand{\arraystretch}{1.0}
		\begin{center}
			\vskip-.3truecm
			\begin{tabular}{cccccc}

				\hline\hline
				No. of subdomains & Layers & Neurons  & Parameters & Interior data & Boundary data \\
				\Xhline{3\arrayrulewidth}
				$50\times 4$      &   4    &   7    & 197     &  250     &  50     \\
				\Xhline{0.8\arrayrulewidth}
				Single domain    &   4    &   55   & 9461    &  10000   &  2000    \\
				                  &   4    &   115  & 40481   &  50000   &  10000   \\
				                  &   5    &   150  & 91201   &  100000  &  20000   \\
				\Xhline{3\arrayrulewidth}
			\end{tabular}
		\end{center}
	}
	\vskip-.2truecm
\end{table}

In Table~\ref{table:2d:large:error},
the relative $L^2$-errors and the total training time in GPU are reported for the large domain oscillatory example in
\eqref{model:2d:largedomain}.
The error results clearly show that the partitioned neural network approximation
gives much more accurate solutions than those in the single neural network approximation.
Even with a much larger number of parameters in the single neural network approximation,
the error result only gives about six percent relative errors at the best case.
The timing result in the partitioned neural network approximation
is also much less than those in the single domain case.
Based on these error and timing results for this particular example,
we can see that the use of partitioned neural networks has
advantages over the single neural network approximation
in terms of the solution accuracy and training efficiency.
In our case, we trained the parameters in the partitioned neural networks
by using iteration methods, Algorithm 1, and with that we perform
the local parameter training procedure in parallel inside every outer iteration.
The communication between neighboring neural network solutions occurs once
before starting next outer iteration in contrast to
the previously developed partitioned neural network approximation methods~\cite{Cpinn,Xpinn,FBpinn},
where the communication is needed every training epoch.

\begin{table}[ht!]
	\caption{Large domain oscillatory example in (\ref{model:2d:largedomain}): Relative $L^2$-errors and the training time in the partitioned neural network approximation ($50\times4$ and Algorithm 1) and
in the single neural network approximation (Single-domain), and Para denotes the number of parameters in the local neural network ($N=50\times 4$) or the number of parameters in the single neural network (Single-domain).}\label{table:2d:large:error}
	{\normalsize \renewcommand{\arraystretch}{1.0}
		\begin{center}
			\vskip-.3truecm
			\begin{tabular}{cccc}

				\hline\hline
				No. of subdomains & Para & $L^2$-error & Time (sec)   \\
				\Xhline{3\arrayrulewidth}
				$50\times 4$      & 197    & 0.0091               & 2856.1    \\
				\Xhline{0.8\arrayrulewidth}
				Single-domain     & 9461   & 0.3347               & 4670.3  \\
				                  & 40481  & 0.2464               & 26260.8   \\
				                  & 91201  & 0.0650               & 104961.5  \\
				\Xhline{3\arrayrulewidth}
			\end{tabular}
		\end{center}
	}
	\vskip-.2truecm
\end{table}

In Fig.~\ref{fig:2d:large}, we present the error decay history of the computational results in Table~\ref{table:2d:large:error}.
The partitioned neural network case shows decreasing errors over the outer iterations.
On the other hand, the errors in the single neural network approximation fluctuate over
the training epochs. We note that in the plots the case of $10$ outer iterations corresponds to the case of 10 times 10000 training epochs in the single neural network approximation.
The error decay plots also support the use of the partitioned neural networks
and the iteration methods
for the training performance and efficiency enhancement in the neural network approximation.

\begin{figure}[ht!]
	\begin{center}
		\includegraphics[width=0.7\textwidth]{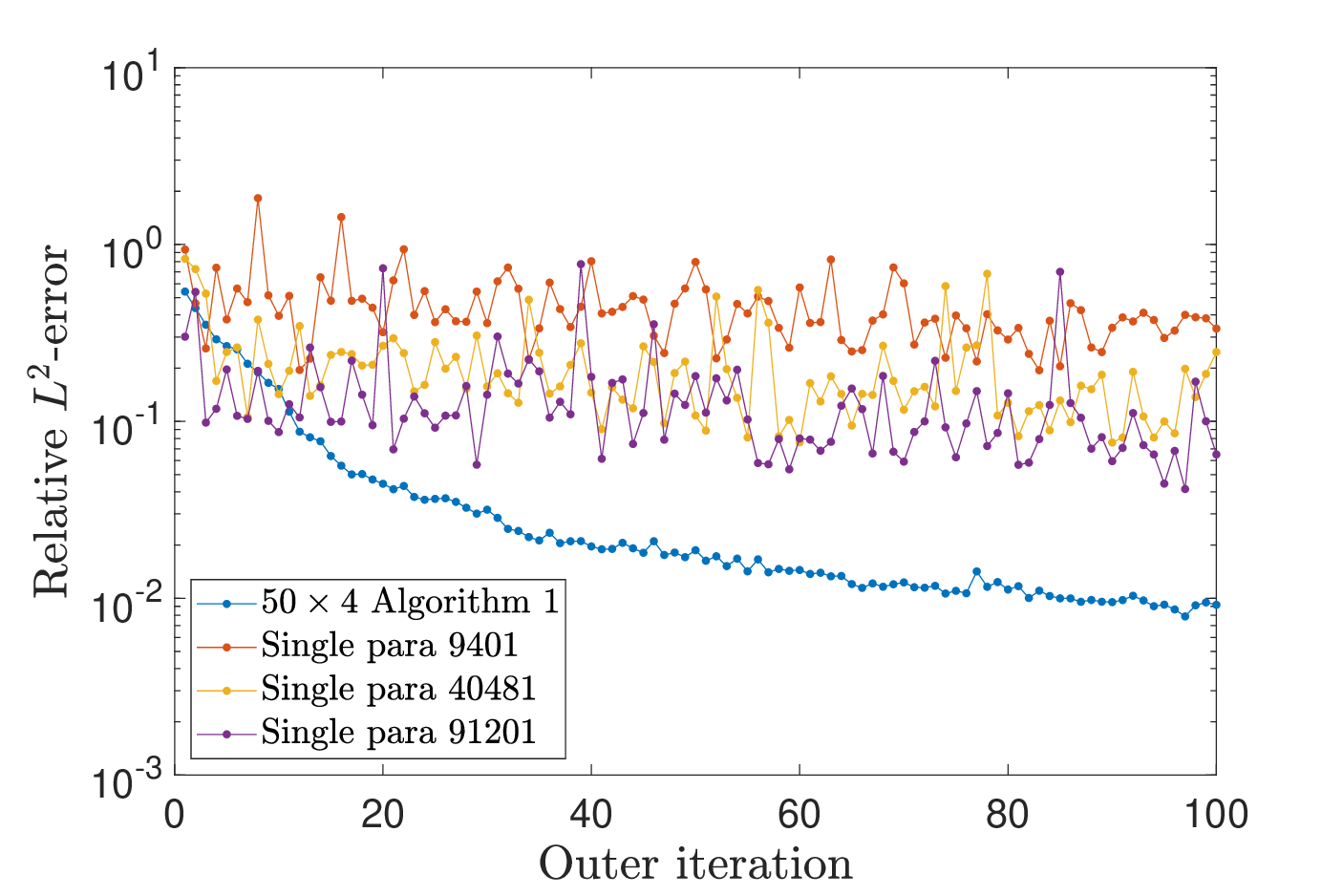}
	\end{center}
	\vskip-.4truecm
	\caption{Large domain oscillatory example in (\ref{model:2d:largedomain}): Error decay history over the outer iterations (and training epochs) in Algorithm 1 (and single-domain case).}\label{fig:2d:large}
\end{figure}

{\bf Smooth example.} We now consider a Poisson model problem in a unit square domain
with the following exact solution
\begin{equation}\label{model:2d:smooth}
u^*(x,y)=\sin(\pi x)\sin(\pi y),\quad (x,y) \in (0,\, 1)^2.
\end{equation}
We will compare the performance of our proposed iteration methods, Algorithms 1-3, for the smooth example.
In Table~\ref{table:2d:smooth:info}, we list the number of parameters, interior data points, and boundary data points used in our computation.
For all the subdomain partition cases, the same coarse neural network is used in Algorithms 2 and 3.
As increasing the number of subdomains, we set the local network size smaller so as to
make the total sum of parameters comparable for all the subdomain partition cases.
\begin{table}[ht!]
	\caption{Hyper parameter settings for the computation results in
Table~\ref{table:2d:smooth:error:trained_U0}: The number of layers, neurons, parameters, and the number of interior and boundary training data points for the local and coarse networks (last row).}\label{table:2d:smooth:info}
	{\normalsize \renewcommand{\arraystretch}{1.0}
		\begin{center}
			\vskip-.3truecm
			\begin{tabular}{c|ccccc}
				
				\hline\hline
				& \multicolumn{5}{c}{Local problem} \\
				\hline\hline
				No. of subdomains & Layers & Neurons & Parameters  & Interior data & Boundary data  \\
				\Xhline{3\arrayrulewidth}
				$2\times 2$  & 4 & 27 & 2377 & 1250 & 250  \\
				$4\times 4$  & 4 & 13 & 599 & 313 & 63  \\
				$6\times 6$  & 4 & 9  & 307 & 139 & 28  \\
				$8\times 8$  & 4 & 7  & 197 & 78 & 16  \\
				$10\times 10$& 4 & 5  & 111 & 50 & 10  \\
				\Xhline{0.8\arrayrulewidth}
                Coarse problem & 4 & 27  & 2377 & 1250 & 250   \\
				\Xhline{3\arrayrulewidth}
			\end{tabular}
		\end{center}
	}
	\vskip-.2truecm
\end{table}

For the network used for the initial value setting, we form a fully connected neural network with 2377 parameters and 1500 training data points selected randomly from the entire domain, which include 1,250 interior data points and 250 boundary data points.
We also train the initial network using the Adam optimizer with the learning rate 0.001
up to 10,000 training epochs.

The relative $L^2$-errors between the neural network approximation and the exact solution,
and the total computation times are reported in Table~\ref{table:2d:smooth:error:trained_U0}
for Algorithms 1-3 with a trained $U^{(0)}$.
We report the mean value of errors for the obtained solutions from five different seeds.

The errors in Algorithm 3 are robust to the number of subdomains but those in Algorithms 1 and 2
are getting larger as more subdomains are in the partition.
The total computation times in all the three algorithms are getting smallers as more subdomains are
introduced in the partition to give a smaller size of local neural networks.
In Algorithms 2 and 3, at each outer iteration, we trained each local neural network solution in parallel
and then trained coarse neural network solution.
This caused additional computation time in Algorithms 2 and 3.

By fully parallelizing local and coarse neural network solutions, we can further reduce
the total computation time but due to heterogeneous data and network structures
between the local and coarse problems we simply trained them sequentially
in our computation.
This issue will need further studies on the data and network implementation
on GPU using the JAX library or some other available options.
In Algorithm 3, the total computation times are larger than those in Algorithm 2.
This larger computation time is related to the second derivative evaluation for the right hand side function
corresponding to each local and coarse neural network problems
at every outer iteration, i.e., Step 2 in Algorithm 3.
We note that the second derivative evaluation for the right hand side function is needed
only for the coarse neural network problem in Algorithm 2.

In Fig.~\ref{fig:2d:smmoth:trained_U0}, we present the error decay history of the obtained results
in Table~\ref{table:2d:smooth:error:trained_U0}.
As increasing the number of subdomains, the error decay in Algorithms 1 and 2 is getting slower
while in Algorithm 3 the error decay rates are observed to be robust.
The decay history behaviors in Algorithms 1 and 3 are consistent with our convergence analysis in
Theorem~\ref{cor:conv:one-level:nn:uhat} and Theorem~\ref{cor:conv:two-level:nn:uhat}.
However, the results in Algorithm 2 do not follow our convergence result for the two-level
case in Theorem~\ref{cor:conv:two-level:nn:uhat}. In the following, we will report that
this is related to the error $\epsilon_0$ of the coarse neural network solution in Algorithm 2.

\begin{table}[ht!]
	\caption{Smooth example in (\ref{model:2d:smooth}): The mean values of the relative $L^2$-errors and the training time in Algorithms 1-3 obtained from 5 different seeds as increasing the number of subdomains with a trained $U^{(0)}$.}\label{table:2d:smooth:error:trained_U0}
	{\normalsize \renewcommand{\arraystretch}{1.0}
		\begin{center}
			\vskip-.3truecm
			\begin{tabular}{ccccccc}

				\hline\hline
				& \multicolumn{2}{c}{Algorithm 1}
				& \multicolumn{2}{c}{Algorithm 2}
				& \multicolumn{2}{c}{Algorithm 3} \\
				\hline\hline
				No. of subdomains & $L^2$-error  & Time (sec) &  $L^2$-error & Time (sec)  &  $L^2$-error & Time (sec) \\
				\Xhline{3\arrayrulewidth}
				Initial $U^{(0)}$ & 0.0118 & 35.9 & 0.0118 & 35.9 & 0.0118 & 35.9 \\
				\Xhline{0.8\arrayrulewidth}
				$2\times 2$    & 0.0021 & 832.9  & 0.0018 & 1020.5 & 0.0012 & 1875.7 \\
				$4\times 4$    & 0.0024 & 528.7  & 0.0017 & 616.6  & 0.0014 & 1088.3 \\
				$6\times 6$    & 0.0047 & 351.8  & 0.0054 & 524.8  & 0.0012 & 899.8  \\
				$8\times 8$    & 0.0060 & 348.1  & 0.0065 & 504.2  & 0.0018 & 862.0  \\
				$10\times 10$  & 0.0104 & 336.4  & 0.0109 & 425.5  & 0.0011 & 778.7  \\
				\Xhline{3\arrayrulewidth}
			\end{tabular}
		\end{center}
	}
	\vskip-.2truecm
\end{table}

\begin{figure}[ht!]
	\begin{center}
		\includegraphics[width=0.3\textwidth]{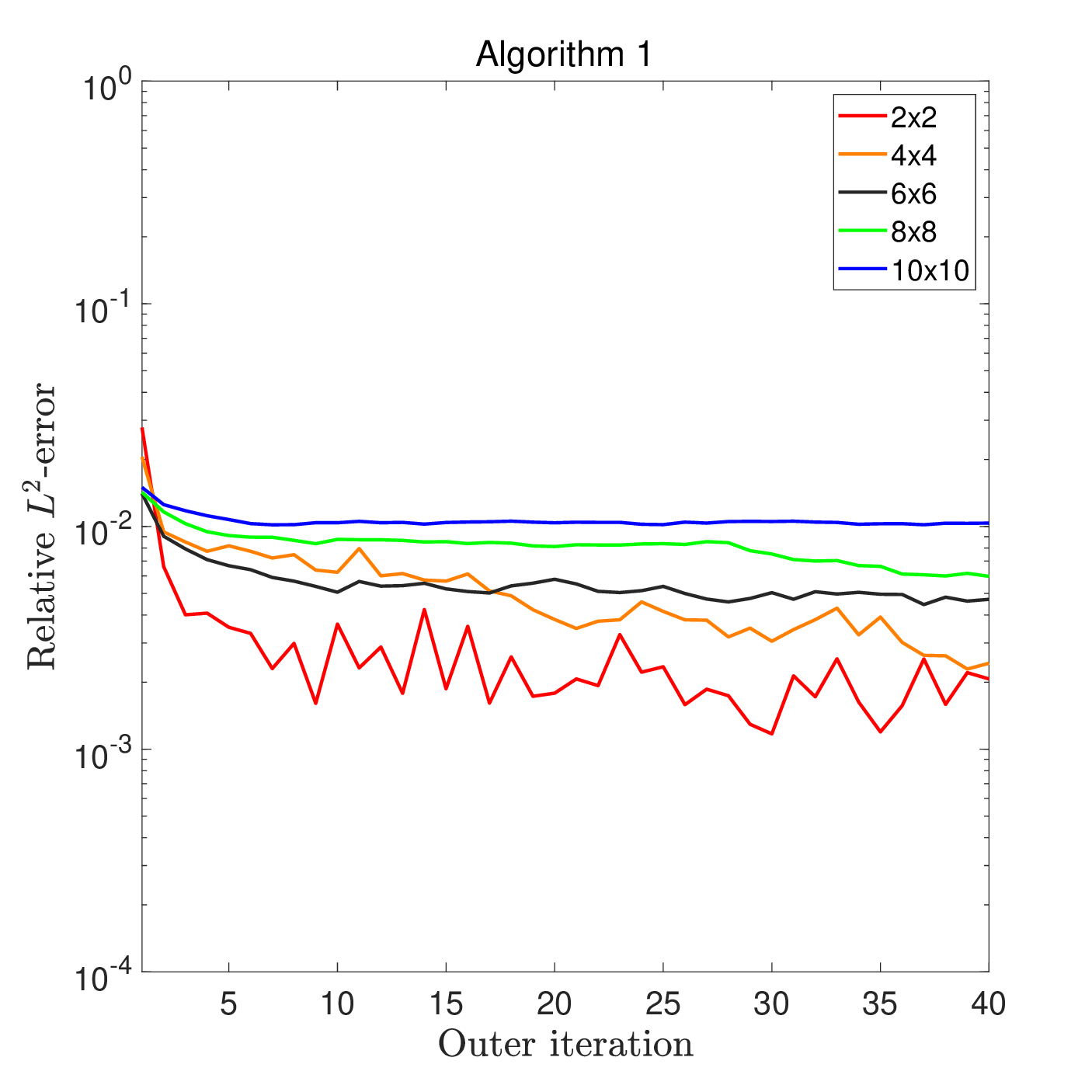} \qquad
		\includegraphics[width=0.3\textwidth]{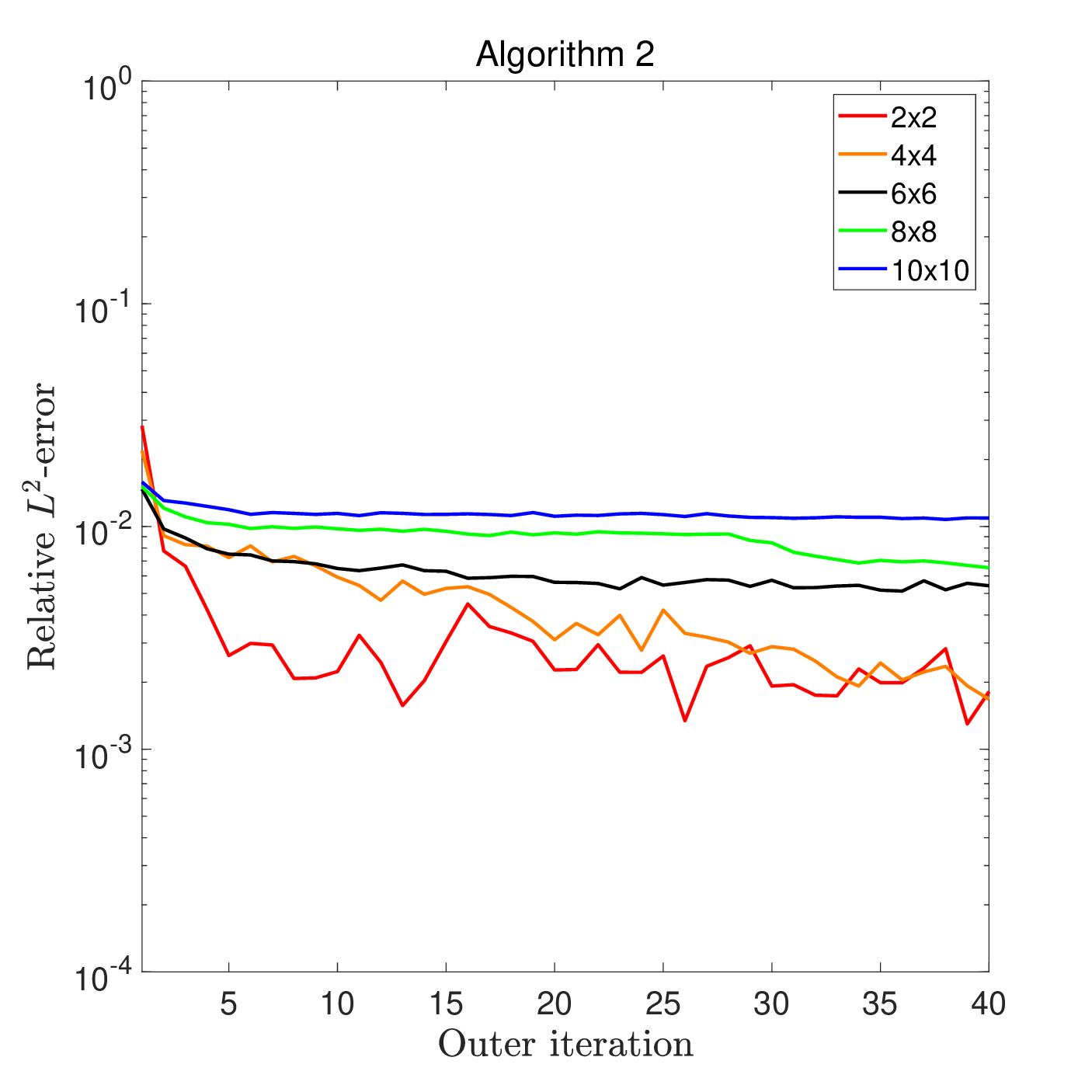} \qquad
		\includegraphics[width=0.3\textwidth]{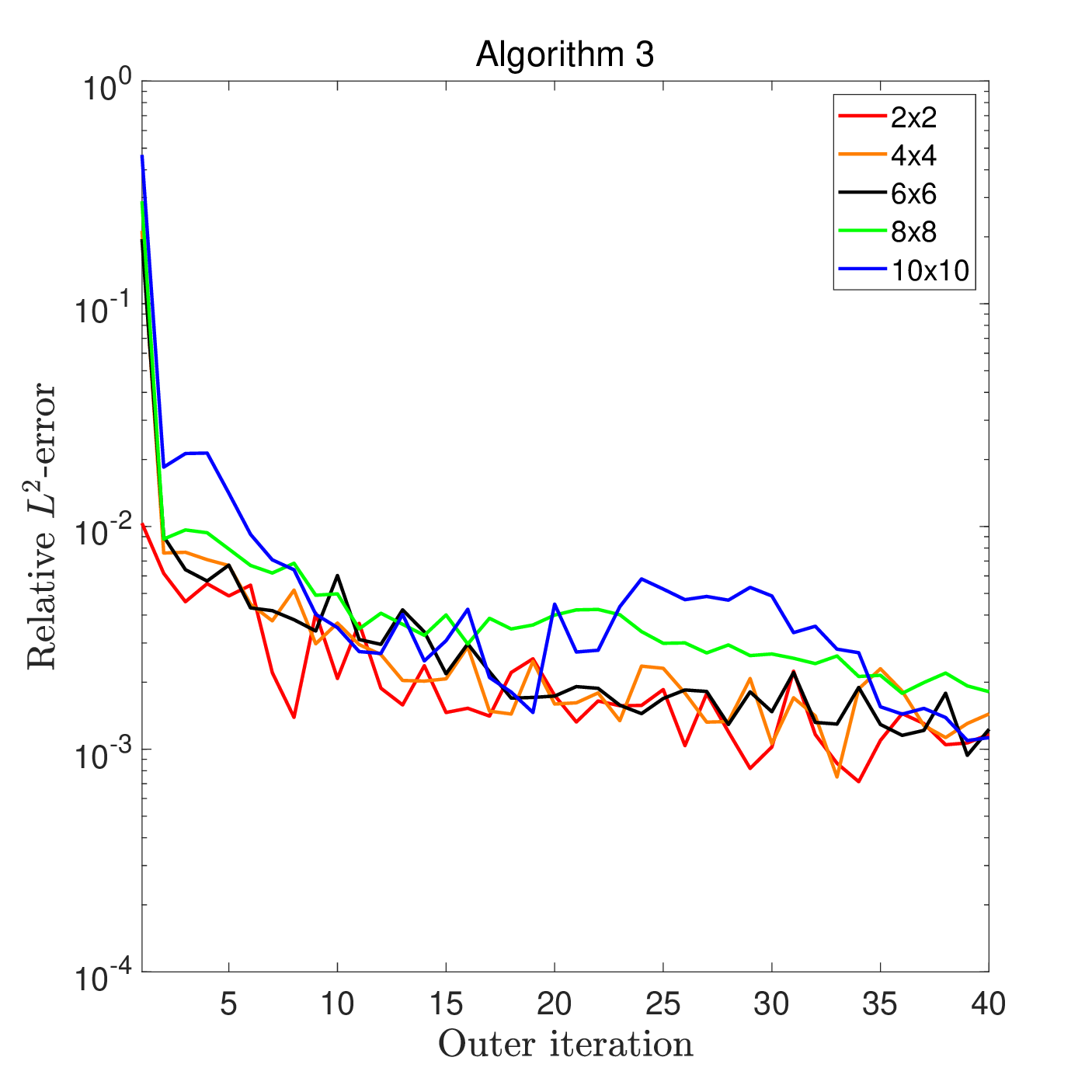}
	\end{center}
	\vskip-.7truecm
	\caption{Smooth example in (\ref{model:2d:smooth}): Error decay history over the outer iterations in the Algorithm 1 (left), Algorithm 2 (middle), and Algorithm 3 (right) as increasing the number of subdomains with a trained $U^{(0)}$.}\label{fig:2d:smmoth:trained_U0}
\end{figure}

We now compare the coarse problems in Algorithms 2 and 3 to explain
why the coarse problem in Algorithm 2 does not give scalable convergence results.
In Fig.~\ref{fig:2d:smooth:coarse_forcing},
the forcing terms in the coarse problem of Algorithms 2 and 3 are plotted for the $6\times 6$ subdomain case.
We plot the updated forcing terms after the first outer iteration, i.e., after training the local and coarse problem solutions for the given initial $U^{(0)}$.
In Algorithm 2, the forcing term plot shows oscillatory and high contrast values near the boundary of the overlapping region in the subdomain partition. This is related to the local problems solved in Algorithm 2, where
the local neural network parameters are trained to minimize both the PDE loss and the boundary condition loss.
The boundary condition loss is known to be more difficult to optimize than the PDE loss, see \cite{NTK-theory}.
Such optimization behavior resulted in high variation residual errors
near the overlapping region boundary as seen in Fig.~\ref{fig:2d:smooth:coarse_forcing},
which are difficult to be well-approximated by the coarse neural network with a small enough $\epsilon_0$.
In addition to that, the residual error behaviors are also affected by the network structures
in Algorithms 2 and 3, see \eqref{parallel:two-level:sol} and \eqref{update:Un:two-level} for Algorithm 2
and see\eqref{Uhat:pu} and \eqref{Un:pu} for Algorithm 3.
In Algorithm 3, by utilizing the partition of unity functions, the local problems for the interior subdomains,
that do not touch the model problem boundary, only have the PDE loss term.
Such local problems and the network structure in \eqref{Uhat:pu} lead to smooth residual errors 
that can be well-resolved by the coarse neural network.
The coarse neural network solution in Algorithm 3 can thus be obtained with a small enough error $\epsilon_0$
to the corresponding coarse Hilbert space solution to give scalable convergence results
as proven in Theorem~\ref{cor:conv:two-level:nn:uhat}.

\begin{figure}[ht!]
	\begin{center}
		\includegraphics[width=0.45\textwidth]{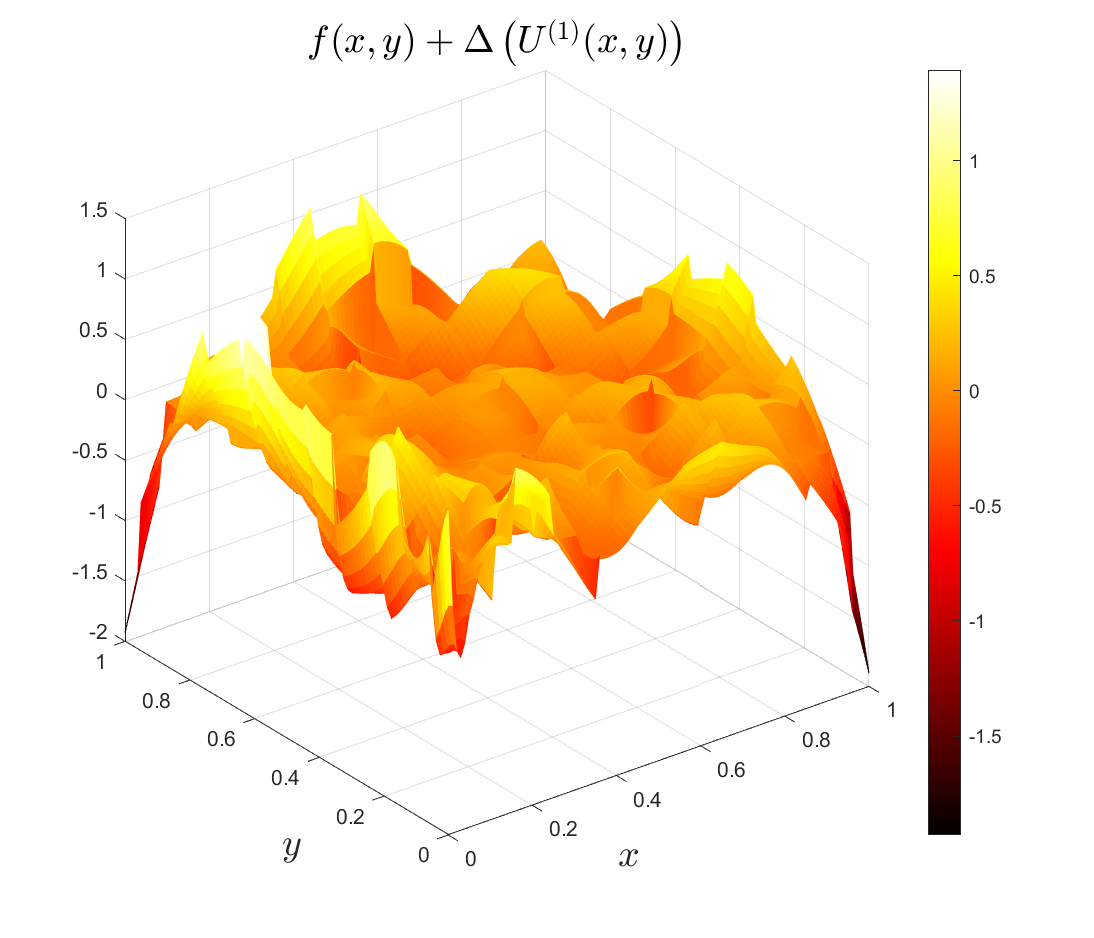} \qquad
		\includegraphics[width=0.45\textwidth]{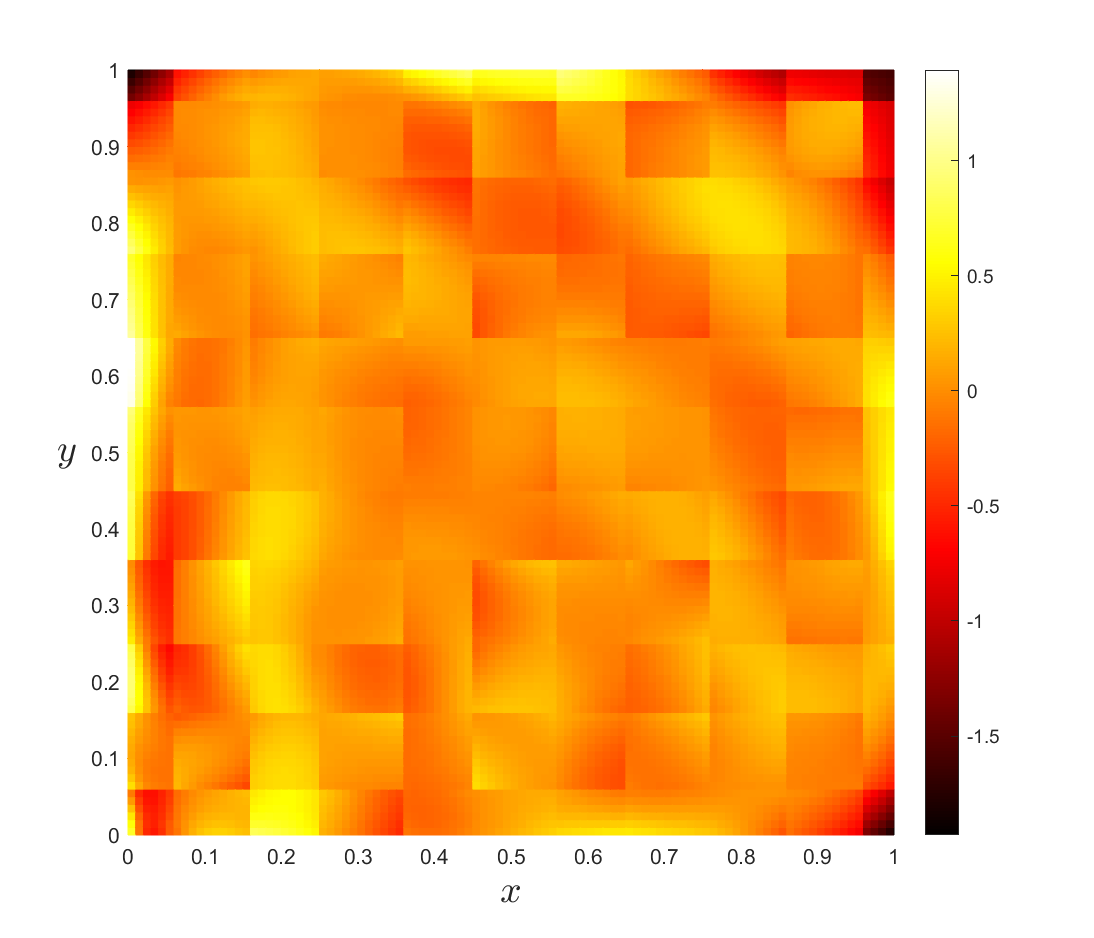} \\
		\includegraphics[width=0.45\textwidth]{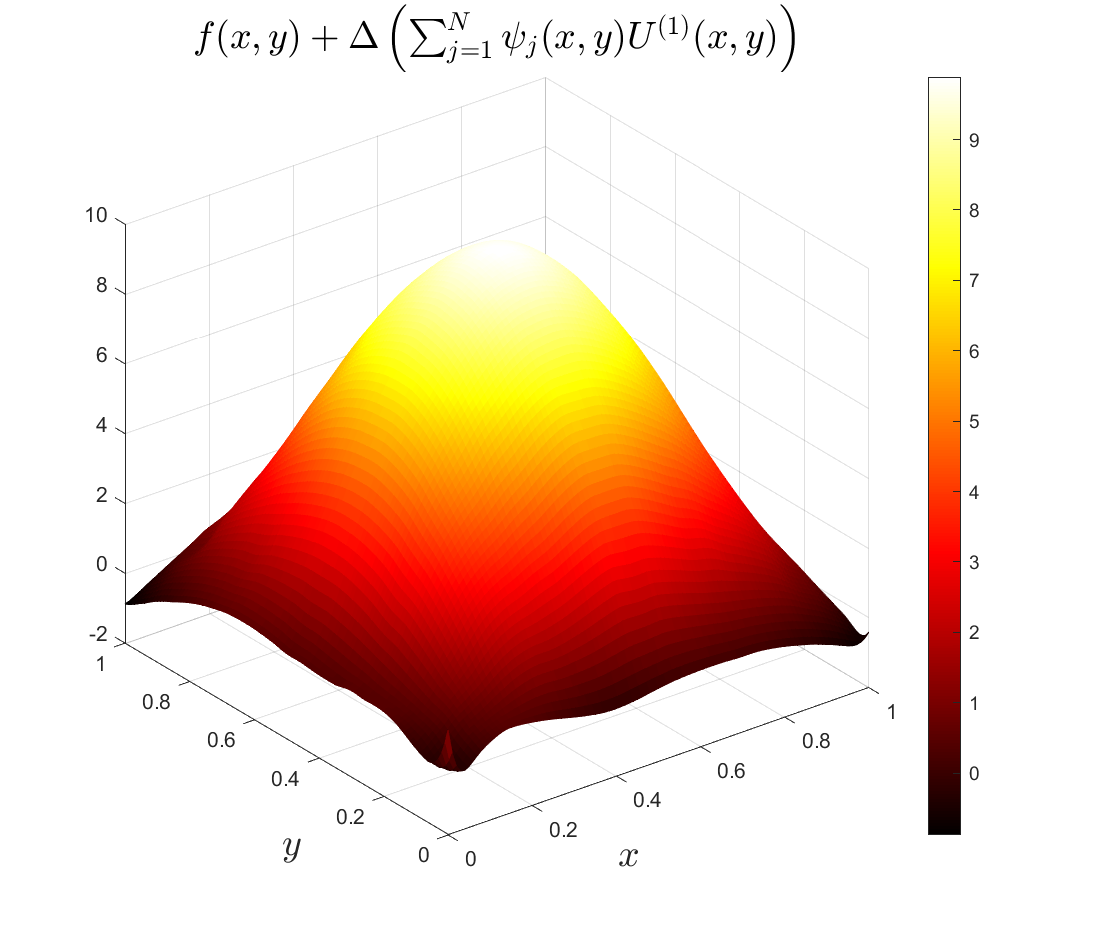} \qquad
		\includegraphics[width=0.45\textwidth]{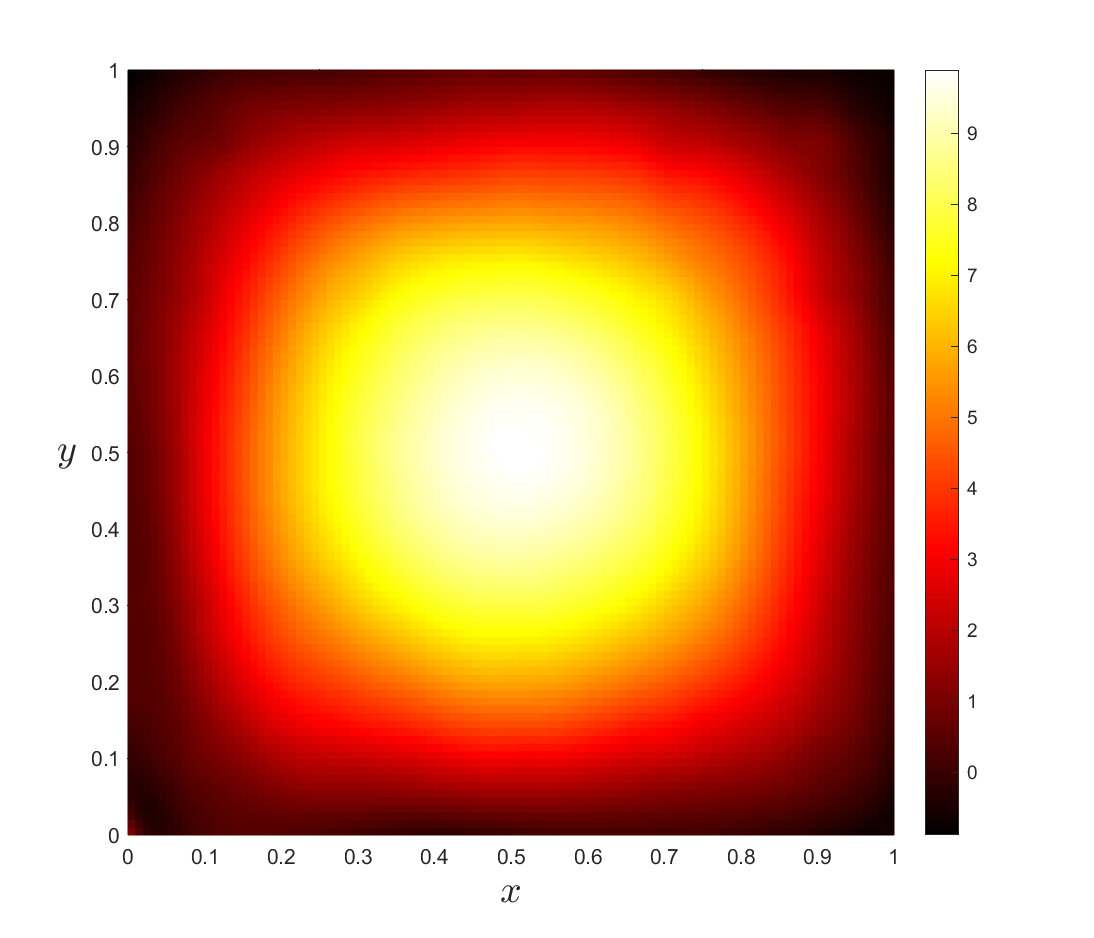}
	\end{center}
	\vskip-.7truecm
	\caption{Forcing terms in the coarse problem: Algorithm 2 (Mesh plot (top left) and Contour plot (top right)),
Algorithm 3 (Mesh plot (bottom left) and Contour plot (bottom right)).}\label{fig:2d:smooth:coarse_forcing}
\end{figure}


{\bf{Multiscale example.}} We now consider the Poisson problem with a multiscale solution,
\begin{align}\label{model:2d:multiscale}
	u^*(x,y)=\sin(\pi x)\sin(\pi y)+0.05\sin(8\pi x)\sin(8\pi y), \quad (x,y)\in(0,1)^2
\end{align}
and test the performance of our iterative algorithms for this more challenging model problem.
In Fig.~\ref{fig:2d:multiscale:exact}, plots of the exact solution and the corresponding right hand side function
in the Poisson problem are presented and in Table~\ref{table:2d:multiscale:info},
the hyper parameter settings used in the following numerical results are listed.

\begin{figure}[ht!]
	\centering
	\includegraphics[width=0.4\textwidth]{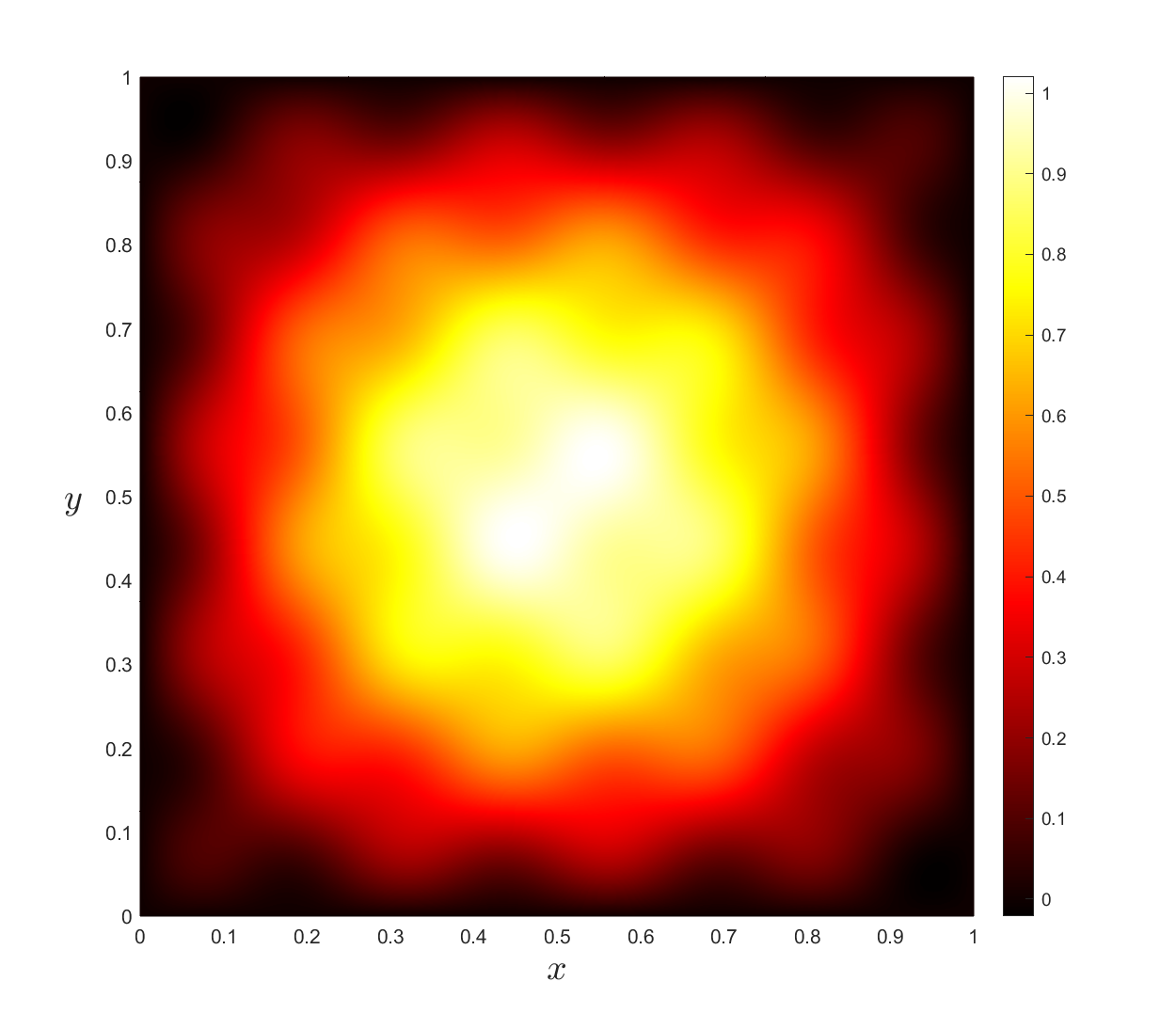}
	\includegraphics[width=0.4\textwidth]{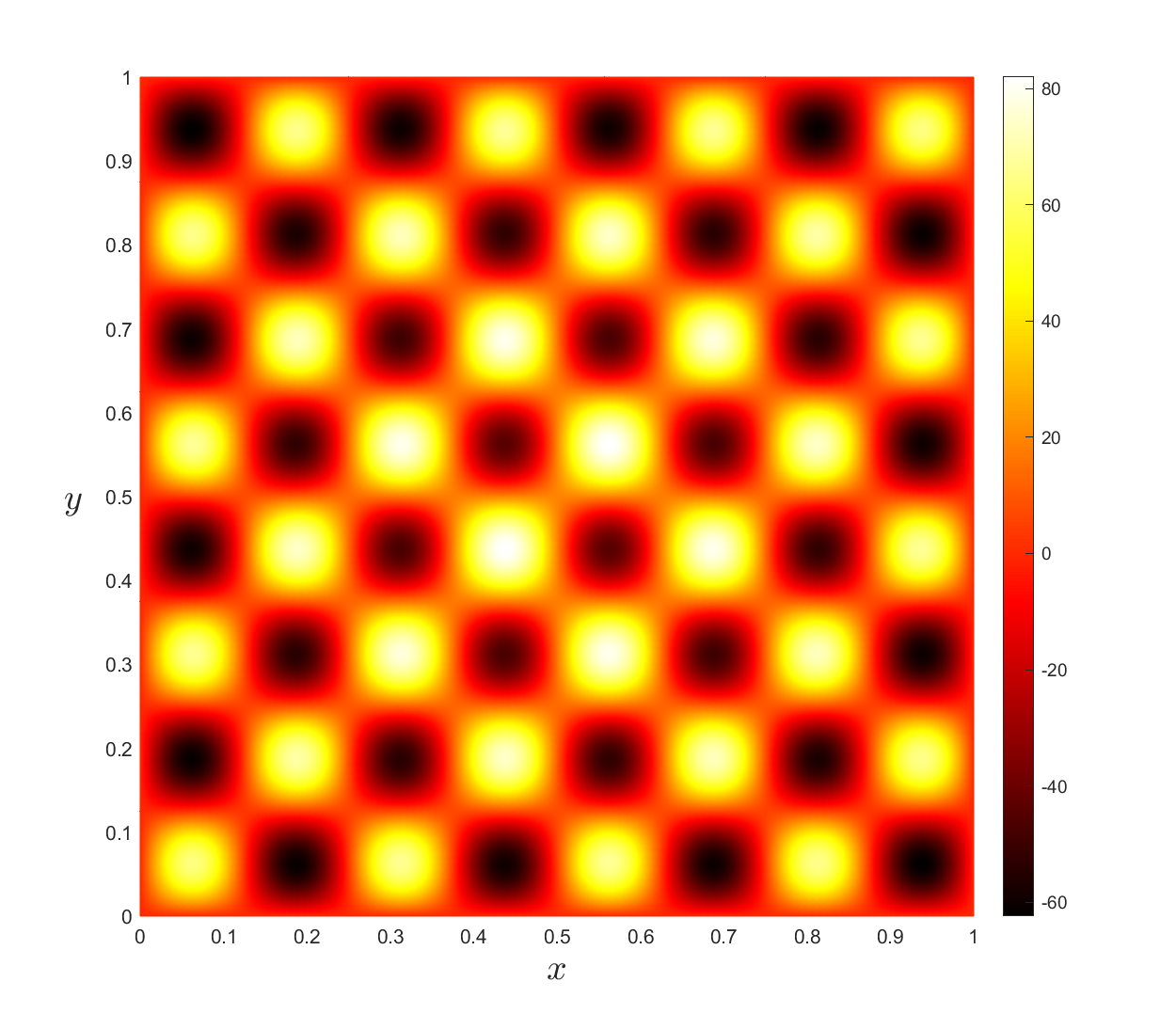}
	\vskip-.4truecm
	\caption{Multiscale example in (\ref{model:2d:multiscale}): Exact solution $u^*(x,y)$ (left) and forcing term $f(x,y)$ (right)}
	\label{fig:2d:multiscale:exact}
\end{figure}

\begin{table}[ht!]
	\caption{Hyper parameter settings for the computation results in
Table~\ref{table:2d:multiscale:error:trained_U0}: The number of layers, neurons, parameters, and the number of interior and boundary training data points for the local and coarse networks (last row).}\label{table:2d:multiscale:info}
	{\normalsize \renewcommand{\arraystretch}{1.0}
		\begin{center}
			\vskip-.3truecm
			\begin{tabular}{c|ccccc}
				
				\hline\hline
				& \multicolumn{5}{c}{Local problem} \\
				\hline\hline
				No. of subdomains & Layers & Neurons & Parameters  & Interior data & Boundary data  \\
				\Xhline{3\arrayrulewidth}
				$2\times 2$  & 4 & 28 & 2549  & 1250 & 250  \\
				$4\times 4$  & 3 & 16 & 609   & 313  & 63   \\
				$6\times 6$  & 2 & 14 & 267   & 139  & 28   \\
				$8\times 8$  & 2 & 10 & 151   & 78   & 16   \\
				\Xhline{0.8\arrayrulewidth}
                Coarse problem & 4 & 28 & 2549 & 1250 & 250 \\
				\Xhline{3\arrayrulewidth}
			\end{tabular}
		\end{center}
	}
	\vskip-.2truecm
\end{table}

Table~\ref{table:2d:multiscale:error:trained_U0} shows the relative $L^2$-errors for the neural network approximation obtained from Algorithms 1-3, with a trained $U^{(0)}$ as the initial.
For the initial $U^{(0)}$, we use the same network size and training settings as in the previous smooth example.
In this multiscale example, we observe similar error behaviors as in the smooth example.
In Algorithms 1 and 2, the iteration convergence gets slower as partitioning the problem domain with more subdomains
while in Algorithm 3, the error results are maintained at a similar level for all the subdomain partition cases.
In Algorithms 1 and 2, for the $6\times6$ and $8\times8$ subdomain cases the resulting errors
are even larger than that in the initial $U^{(0)}$.
It shows that as partitioning the problem domain into more subdomains,
the solution convergence in Algorithms 1 and 2 may get quite badly, even started with a good initial.
The use of the partition of unity functions in Algorithm 3 can resolve
this convergence problem in Algorithm 1 to give robust error results for all the subdomain partition cases.
We note that the only difference between Algorithms 2 and 3 is the use of the partition of unity functions
when forming the partitioned neural networks.
The coarse problem in Algorithm 3 works effectively to solve the convergence problem observed in Algorithm 1
as increasing the number of subdomains, since the local neural network solutions in Algorithm 3
produce a smooth right hand side function in the coarse problem that can
be approximated well enough by the coarse neural network.
In Fig.~\ref{fig:2d:multiscale}, error decay history for the obtained results in Table~\ref{table:2d:multiscale:error:trained_U0} is presented.
In Algorithms 1 and 2, the error decay rates depend on the number of subdomains in the partition, i.e.,
as the more subdomains in the partition, the slower the error decay rates.
In Algorithm 3, the coarse problem works effectively to give faster error decay rates
than those in Algorithms 1 and 2.

\begin{table}[ht!]
\caption{Multiscale example in (\ref{model:2d:multiscale}): The mean values of the relative $L^2$-errors in the Algorithms 1-3 obtained from 5 different seeds as increasing the number of subdomains with a trained $U^{(0)}$.}\label{table:2d:multiscale:error:trained_U0}
	{\normalsize \renewcommand{\arraystretch}{1.0}
		\begin{center}
			\vskip-.3truecm
			\begin{tabular}{cccc}

				\hline\hline
				No. of subdomains & Algorithm 1 & Algorithm 2 & Algorithm 3  \\
				\Xhline{3\arrayrulewidth}
				Initial $U^{(0)}$ & 0.0131 & 0.0131 & 0.0131  \\
				\Xhline{0.8\arrayrulewidth}
				$2\times 2$    & 0.0021 & 0.0021 & 0.0023  \\
				$4\times 4$    & 0.0040 & 0.0034 & 0.0026  \\
				$6\times 6$    & 0.0192 & 0.0167 & 0.0017  \\
				$8\times 8$    & 0.0407 & 0.0451 & 0.0033  \\
				\Xhline{3\arrayrulewidth}
			\end{tabular}
		\end{center}
	}
	\vskip-.2truecm
\end{table}

\begin{figure}[ht!]
	\begin{center}
		\includegraphics[width=0.3\textwidth]{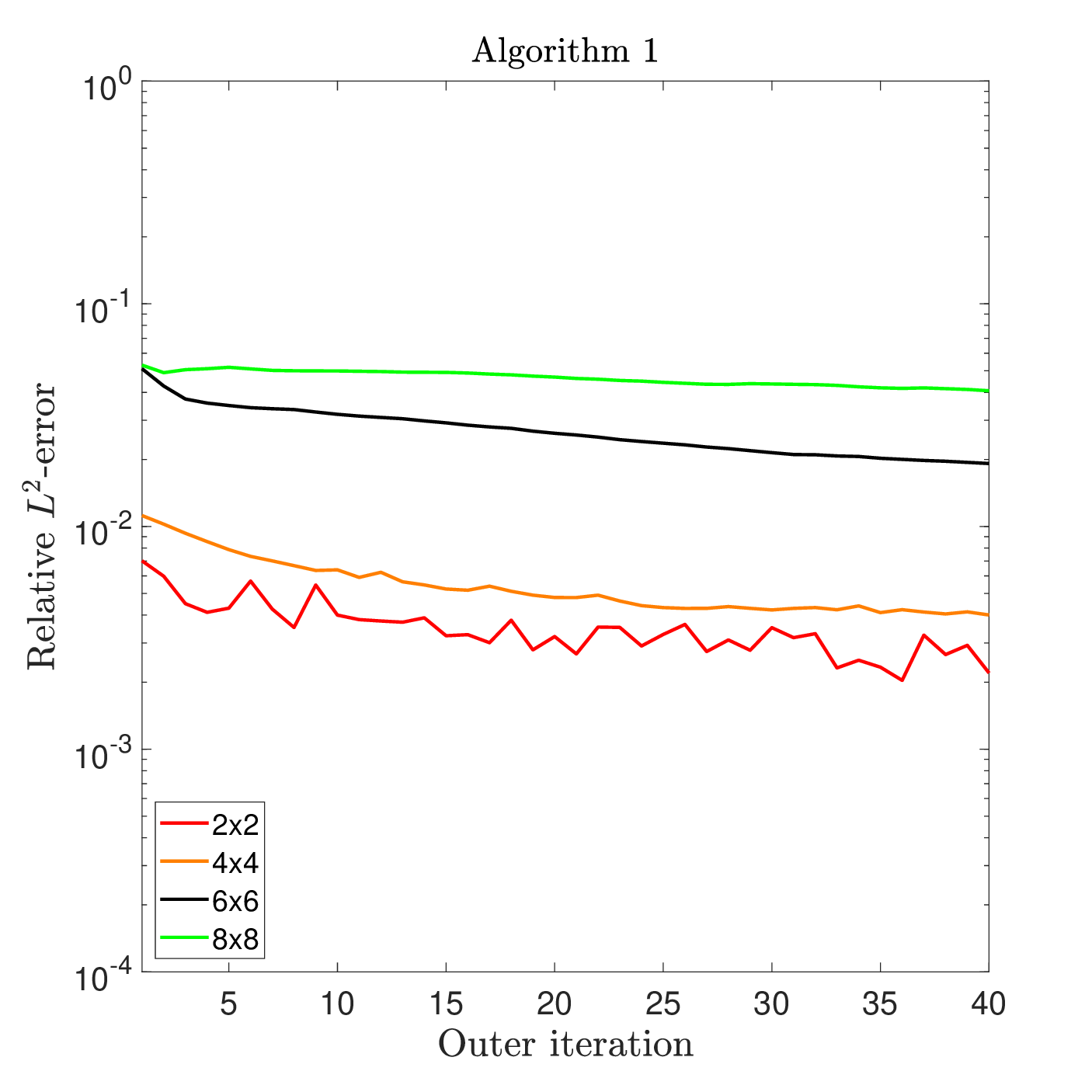} \qquad
		\includegraphics[width=0.3\textwidth]{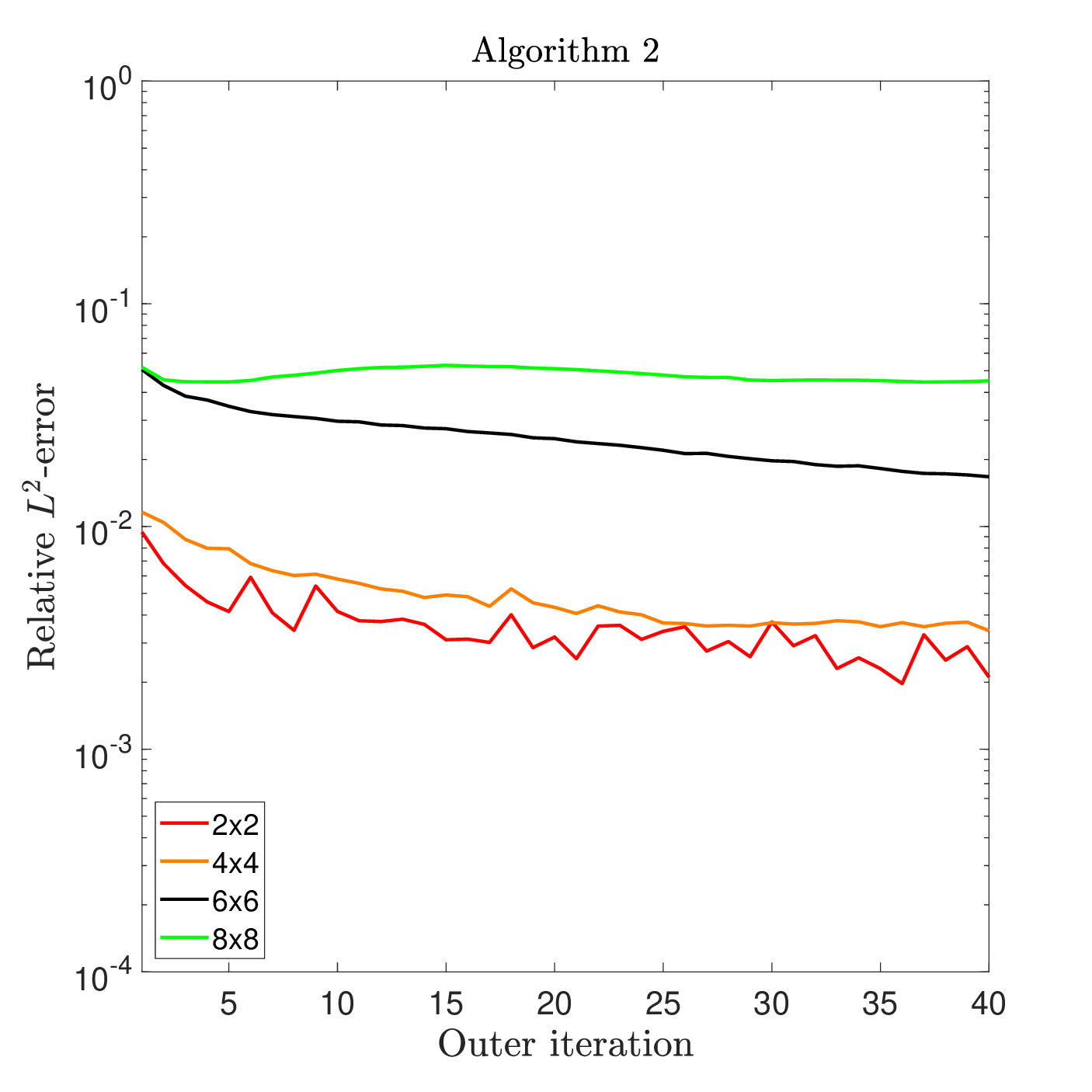} \qquad
		\includegraphics[width=0.3\textwidth]{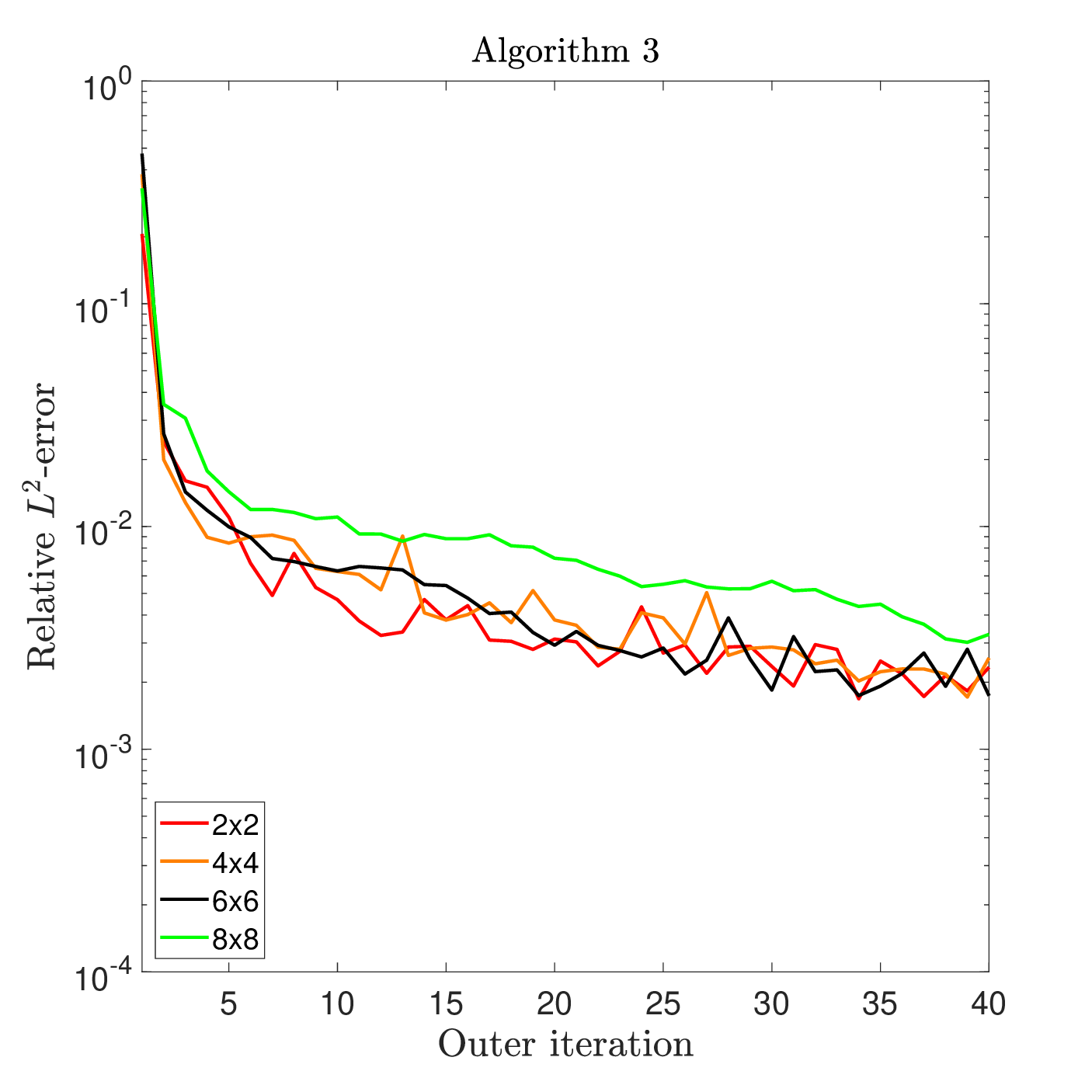}
	\end{center}
	\vskip-.7truecm
	\caption{Multiscale example in (\ref{model:2d:multiscale}):
Error decay history over the outer iterations in Algorithm 1 (left), Algorithm 2 (middle), and Algorithm 3 (right) as increasing the number of subdomains with a trained $U^{(0)}$.}\label{fig:2d:multiscale}
\end{figure}


\section{Conclusions}\label{sec:conclude}
Iterative algorithms are proposed and analyzed for partitioned neural network approximation
defined on overlapping subdomain partitions of the problem domain,
with the aim of reducing the heavy communication cost in the partitioned
neural network approach.
The algorithm development and analysis are based on the classical additive Schwarz method
and the error assumptions on the local and coarse neural network solutions at each iteration.
To enhance the scalability of the proposed algorithms as increasing the number of subdomains, 
a two-level algorithm with partitioned neural networks, that are formed by partition of unity functions,
is proposed.
Such a partitioned network structure produces favorable residual errors in the coarse problem 
at each iteration, that can be well resolved by the coarse neural network solution,
and it thus makes the coarse neural network solution work effectively
to speed up the iteration convergence as increasing the number of subdomains.

\section*{Acknowledgments}
The second author was supported by the National Research Foundation of Korea(NRF) grants
funded by NRF-2022R1A2C100388511.






\bibliographystyle{plain}
\bibliography{ddforpinn}

%

\end{document}